\theoremstyle{plain} 
\newtheorem{thm}{Theorem}[section] 
\newtheorem{cor}[thm]{Corollary} 
\newtheorem{prop}[thm]{Proposition} 
\newtheorem{rmk}[thm]{Remark}
\def\U{\mathscr{U}} 
\def\c{\mathrm{c}} 
\def\d{\mathrm{d}} 
\def\Nset{\mathbb{N}} 
\def\Pset{\mathbb{P}} 
\def\Rset{\mathbb{R}} 
\def\Sset{\mathbb{S}} 
\def\Tset{\mathbb{T}} 
\def\Zset{\mathbb{Z}}
\def\epsilon{\varepsilon} 
\DeclareMathOperator{\esssup}{\mathrm{ess\,sup}} 
\begin{document} 


\title[Kuramoto model with random natural frequencies]%
{Continuum limit of the Kuramoto model with random natural frequencies
 on uniform graphs}
\author{Kazuyuki Yagasaki} 
\address{Department of Applied Mathematics and Physics, Graduate School of Informatics, 
Kyoto University, Yoshida-Honmachi, Sakyo-ku, Kyoto 606-8501, JAPAN} 

\email{yagasaki@amp.i.kyoto-u.ac.jp} 

\date{\today} 
\subjclass[2010]{34C15; 45J05; 34D06; 34D20; 45M10; 05C90} 
\keywords{Kuramoto model; continuum limit; random natural frequency; synchronization;
stability; uniform graph} 

\begin{abstract}
We study the Kuramoto model (KM)
 having random natural frequencies and defined on uniform graphs
 that may be complete, random dense or random sparse.
The natural frequencies are assumed to be independent and identically distributed
 on a bounded interval.
In the previous work,
 the corresponding continuum limit (CL) was proven
 to approximate stable motions in the KM well
 when the natural frequencies are deterministic,
 even if the graph is not uniform,
 although it may not do so for unstable motions and bifurcations.
We show that the method of CLs is still valid
 even when the natural frequencies are random,
 especially uniformly distributed.
In particular, an asymptotically stable family of solutions to the CL
 is proven to behave in the $L^2$ sense
 as if it is an asymptotically stable one in the KM,
 under an appropriate uniform random permutation.
We demonstrate the theoretical results by numerical simulations
 for the KM with uniformly distributed random natural frequencies.
\end{abstract}
\maketitle 


\section{Introduction}

We consider the Kuramoto model (KM) \cite{K75,K84} with natural frequencies
 defined on a graph $G_n=\langle V(G_n),E(G_n),W(G_n)\rangle$:
\begin{equation} 
\frac{\d}{\d t}u_i^n (t) =\omega_i^n 
 +\frac{K}{n \alpha_n} \sum^{n}_{j=1} 
 w_{ij}^n \sin \left( u_j^n(t) - u_i^n(t) \right),\quad i \in [n]:=\{1,2,\ldots,n\}, 
\label{eqn:dsys} 
\end{equation} 
where $u_i^n(t)\in\Sset^1=\Rset/2\pi\Zset$ and $\omega_i^n\in\Rset$,
 respectively, stand for the phase and natural frequency of oscillator at the node $i\in [n]$,
 and $K$ and $\alpha_{n}>0$ are, respectively, the coupling constant and scaling factor.
Here $V(G_n)=[n]$ and $E(G_n)$ are the sets of nodes and edges, respectively, 
 and $W(G_n)$ is an $n\times n$ weight matrix given by 
\begin{equation*} 
(W(G_{n}))_{ij}= 
\begin{cases} 
w_{ij}^n & \mbox{if $(i,j)\in E(G_{n})$};\\ 
0 &\rm{otherwise},
\end{cases} 
\end{equation*}
so that the relation
\[ 
E(G_n)=\{(i,j)\in[n]^2\mid (W(G_{n}))_{ij}\neq 0\}
\]
holds,
 where each edge is represented by an ordered pair of nodes $(i,j)$, 
which is also denoted by $j\to i$, and a loop is allowed.
If $W(G_n)$ is symmetric, 
then $G_n$ represents an undirected weighted graph 
and each edge is also denoted by $i\sim j$ instead of $j\to i$. 
When $G_n$ is a simple graph, 
$W(G_n)$ is a matrix whose elements are $\{0,1\}$-valued. 
When $G_n$ is a random graph, 
$W(G_n)$ is a random matrix. 
We say that $G_n$ is a \emph{dense} graph 
if $\# E(G_n)/(\# V(G_n))^2>0$ (a.s.) as $n \rightarrow\infty$,
 where `$\#$' represents the cardinality of the set (i.e., the number of elements) as usual.
If $\# E(G_n)/(\# V(G_n))^2\rightarrow 0$ as $n \rightarrow \infty$, 
then we call it a \emph{sparse} graph.The scaling factor $\alpha_n$ is one if $G_{n}$ is dense,
 and less than one with $\alpha_{n}\searrow 0$ and $n \alpha_{n} \to\infty$
 as $n\rightarrow \infty$ if $G_{n}$ is sparse.
We are interested in the dynamics of the KM \eqref{eqn:dsys} in the limit of $n\to\infty$,
 and treat random dense and sparse graphs as well as deterministic dense ones at the same time.
So we use the superscript $n$ for the state variables and parameters
 to clarify their dependency on $n$,
 and to introduce the scaling factor $\alpha_n$.
The weight matrix $W(G_n)$ is given as follows.

Let $I=[0,1]$ and let $W\in L^2 (I^2)$ be a nonnegative  function.
If $G_{n}$, $n\in\Nset$, are deterministic dense graphs, then
\begin{equation}
w_{ij}^{n} = \langle W\rangle_{ij}^n
:= n^2 \int_{I_i^n \times I_j^n}W(x,y) \d x\d y,
\label{eq:ddd}
\end{equation}
where
\[
I_i^n:=\begin{cases}
[(i-1)/n,i/n) & \mbox{for $i<n$};\\
[(n-1)/n,1] & \mbox{for $i=n$}.
\end{cases}
\]
If $G_n$, $n\in\Nset$, are random dense graphs, 
then $w_{ij}^{n}=1$ with probability 
\begin{equation}
\mathbb{P}(j \rightarrow i) = \langle W\rangle_{ij}^n, 
\label{eq:ddr}
\end{equation}
where the range of $W$ is contained in $I$. 
If $G_{n}$, $n\in\Nset$, are random sparse graphs, 
then $w_{ij}^{n}=1$ with probability 
\begin{equation} 
\mathbb{P}(j \rightarrow i) = \alpha_{n} \langle \tilde{W}_{n} \rangle_{ij}^n, 
\quad \tilde{W}_{n}(x,y) :=\alpha^{-1}_{n} \wedge W(x,y), 
\label{eq:sdr} 
\end{equation} 
where $\alpha_{n} =n^{-\gamma}$ with $\gamma\in(0,\frac{1}{2})$,
 and $a\wedge b=\min(a,b)$ for $a,b\in\Rset$. 
Here $w_{ij}^n$, $i,j\in[n]$, are also assumed
 to be independently distributed in $j\in[n]$ for each $i\in[n]$
 for random graphs, whether they are dense or sparse.
The function $W(x,y)$ is usually called a \emph{graphon} \cite{L12}.
Such a construction of a random graph
 was given in \cite{M19} and used in \cite{IY23,Y24b,Y24c,Y24d}.

Such coupled oscillators in complex networks
 have recently attracted much attention and have been extensively studied.
They provide many mathematical models in various fields
 such as physics, chemistry, biology, social sciences and engineering.
Among  them, the KM is one of the most representative models
 and has been generalized in several directions.
It has very frequently been subject to research,
 especially as a prototype to discuss the synchronization phenomenon
 in natural sciences and applications.
In particular, the natural frequencies are often assumed
 to be distributed according to some probability density
 since real-world models are considered to consist of a random set of oscillators.
See \cite{S00,PRK01,ABVRS05,ADKMZ08,DB14,PR15,RPJK16}
 for the reviews of vast literature on coupled oscillators in complex networks
 including the KM and its generalizations.

In the previous work \cite{IY23},
 coupled oscillator networks including \eqref{eqn:dsys} were studied
 and shown to be well approximated by the corresponding continuum limits (CLs),
 for instance, which are written as
\begin{equation} 
\frac{\partial}{\partial t}u(t,x)=\omega(x)+K\int _{I}W(x,y)\sin(u(t,y)-u(t,x))\d y,\quad
x\in I,
\label{eqn:csys0}
\end{equation}
for \eqref{eqn:dsys} if the natural frequencies $\omega_i^n$, $i\in[n]$, are deterministic
 and given by $\omega\in L^2(I)$, called a \emph{frequency function}, as
\begin{equation}
\omega_i^n=n\int_{I_i^n}\omega(x)dx, \quad i \in [n].
\label{eqn:omega}
\end{equation}
More general cases in which networks of coupled oscillators are defined 
 on multiple graphs which may be deterministic dense, random dense or random sparse
 were discussed in \cite{IY23}.
Some results on the stability of solutions to the coupled oscillators and CLs
 were also refined in \cite{Y24a}.
Similar results for such networks which are defined on single graphs
 and have the same or equivalently zero natural frequency
 were obtained earlier in \cite{KM17,M14a,M14b,M19}
 although they are not applicable to \eqref{eqn:dsys} and \eqref{eqn:csys0}
 even if the natural frequencies are deterministic.
Such a CL was introduced for the classical KM,
 which depends on a single complete simple graph
 but may have natural frequencies depending on each oscillator,
 without a rigorous mathematical guarantee very early in \cite{E85},
 and fully discussed for the case of equally placed natural frequencies
 very recently in \cite{Y24a}.
Moreover, bifurcations of continuous stationary solutions
 or twisted solutions in CLs for the KM of identical oscillators
 with two-mode interaction depending on two graphs
 or defined on nearest neighbor graphs with feedback control or not were discussed
 by the center manifold reduction technique \cite{HI11},
 which is a standard one in dynamical systems theory \cite{GH83,K04},
 in \cite{Y24b,Y24c,Y24d}.
Similar CLs were also utilized for the KM with nonlocal coupling
 and a single or zero natural frequency
 in \cite{GHM12,M14c,MW17,WSG06}.

On the other hand,
 when the natural frequencies are randomly distributed
 in coupled oscillator networks including the KM \eqref{eqn:dsys},
 a different approach in which an integro-partial differential equation
 called the \emph{Vlasov equations} governing conditional probability density function
 of the phase $u$ depending on time $t$
 under the condition that the natural frequency is $\omega$ are treated
 has more typically been used \cite{ABVRS05,DF18,DB14,RPJK16,S00}
 although its mathematical foundations were provided relatively recently
 in \cite{C15,CN11}.
The Ott-Antonsen ansatz technique \cite{OA08} has also often been utilized
 to analyze the Vlasov equations approximately (see, e.g., \cite{DB14,PR15,RPJK16}).
Only a unimodal or bimodal, typically special, probability density function
 of the natural frequency was treated there.
The approach has also been extended
 to ones defined on single graphs \cite{CM19a,CM19b}.
Compared with those results,
 two advantages of the approach by the use of such CLs as \eqref{eqn:csys0}
 are to deal with the case of deterministic natural frequencies
 and to give more direct description on the dynamics of the coupled oscillators
 without using probability density functions
 although the case of random natural frequencies
 seems difficult to treat directly.

In this paper, we study the KM \eqref{eqn:dsys} on a graph $G_n$
 that is uniform, i.e., $W(x,y)=p$ with a constant $p\in(0,1]$,
 but may be deterministic dense (i.e., complete simple),
 random dense or random sparse,
 when the natural frequencies $\omega_i^n$, $i\in[n]$,
 are randomly distributed on a bounded interval.
We use the CL \eqref{eqn:csys0}, which becomes
\begin{equation} 
\frac{\partial}{\partial t}u(t,x)=\omega(x)+pK\int _{I}\sin(u(t,y)-u(t,x))\d y,
\label{eqn:csys}
\end{equation}
when $W(x,y)=p$,
 to discuss the dynamics of the KM \eqref{eqn:dsys}
 defined on the graphs and having the random natural frequencies.
We show that the CL \eqref{eqn:csys} for the KM \eqref{eqn:dsys}
 with deterministic natural frequencies is useful
 even when the natural frequencies are randomly distributed,
 and describe the dynamics of the KM \eqref{eqn:dsys} with the random natural frequencies,
 based on the results of \cite{IY23,Y24a}.
See Theorem~\ref{thm:3a} below.
In particular, an asymptotically stable family of solutions to the CL \eqref{eqn:csys}
 is proven to behave in the $L^2$ sense
 as if it is an asymptotically stable one  in the KM \eqref{eqn:dsys},
 under an appropriate uniform random permutation.
 
The main results of the previous work \cite{Y24a}
 were on the KM \eqref{eqn:dsys} in which $n$ is odd,
 $\omega_i^n$, $i\in[n]$, are deterministic and given by \eqref{eqn:omega} with
\begin{equation}
\omega(x)=a(x-\tfrac{1}{2}),\quad
a>0,
\label{eqn:lomega}
\end{equation}
so that they are placed equally,
\begin{equation}
\omega_i^n=\frac{a}{2n}(2i-n-1),\quad
i\in[n],
\label{eqn:ep}
\end{equation}
and $G_n$ is complete simple:
The existence of $2^{(n-1)/2}$ one-parameter families of equilibria were detected,
 and their bifurcations and stability were completely determined.
Moreover, it was shown there that
 the CL \eqref{eqn:csys} with \eqref{eqn:lomega}
 has an asymptotically stable family of continuous stationary solutions
 that appears suddenly at the critical value $K_\c=2/\pi$
 when $K$ is increased for the other parameter values fixed,
 while infinitely many unstable families of discontinuous stationary solutions exist there.
Thus, the bifurcation behavior occurring in the CL \eqref{eqn:csys}
 is different from saddle-node bifurcations occurring typically in finite-dimensional dynamical systems
 and from ones in the KM \eqref{eqn:dsys}.

Here we especially discuss the dynamics of the KM \eqref{eqn:dsys}
 with random natural frequencies distributed on a bounded interval uniformly.
In particular, we show that
 the asymptotically stable family of continuous stationary solutions
 to the CL \eqref{eqn:csys} with \eqref{eqn:lomega}
 behaves in the $L^2$ sense
 as if it is an asymptotically stable one  in the KM \eqref{eqn:dsys},
 under an appropriate uniform random permutation
 (see Theorem~\ref{thm:4a} below).
The theoretical results are also demonstrated
 by numerical simulations for the KM \eqref{eqn:dsys}
 having uniformly distributed natural frequencies
 and defined on complete simple, random dense and random sparse graphs.
Synchronized rotational motions
 in feedback control of the KM \eqref{eqn:dsys} with deterministic or random natural frequencies
 defined on such uniform graphs
 and its CL will also be discussed and reported elsewhere \cite{KY25,Y25}.

The outline of this paper is as follows:
In Section~2, we review the previous results of \cite{IY23,Y24a}
 on some relationships between coupled oscillator networks and their CLs
 with deterministic natural frequencies
 in the context of the KM \eqref{eqn:dsys} and CL \eqref{eqn:csys0}
 and on the CL \eqref{eqn:csys} with the frequency function \eqref{eqn:lomega}.
We give our main results on the KM \eqref{eqn:dsys}
 with random natural frequencies distributed on bounded intervals in Section~3,
 and describe their consequences
 to the case of uniformly distributed random natural frequencies in Section~4.
Finally, numerical simulation results for the KM \eqref{eqn:dsys}
 having uniformly distributed natural frequencies
 and defined on uniform deterministic or random graphs
 are given in Section~5.


\section{Previous Results}

\subsection{Relationship between the KM \eqref{eqn:dsys} and CL \eqref{eqn:csys0}}

We first review the results of \cite{IY23,Y24a} on some relationships
 between coupled oscillator networks and their CLs
 when the natural frequencies are deterministic,
 in the context of  \eqref{eqn:dsys} and \eqref{eqn:csys0}.
See Section~2 and Appendices~A and B of \cite{IY23}
 and Section~2 of \cite{Y24a} for more details
 including the proofs of the theorems stated below.
We remark that we can treat the natural frequencies as a graph since
\[
\frac{1}{n}\sum_{j=1}^n\tilde{w}_{ij}^n\tilde{D}(u_j^n(t)-u_i^n(t))=\omega_i^n
\]
when $\tilde{w}_{ij}^n=\omega_i^n$, $j\in[n]$, for each $i\in[n]$,
 and $\tilde{D}(u)=1$.

Henceforth, we assume for the $L^2(I^2)$ function $W(x,y)$
that there exist positive constants $C_j$, $j=1,2$, such that
\begin{equation}
\esssup\displaylimits_{y \in I} \int_I|W (x,y)|dx\leq C_1
\label{eqn:assumpx}
\end{equation}
and
\begin{equation}
\esssup\displaylimits_{x\in I} \int_I|W(x,y)|dy \leq C_2.
\label{eqn:assumpy}
\end{equation}
Here `$\esssup$' represents the essential supremum of the function,
 which equals its supremum except on a set of the Lebesgue measure zero.
If $W(x,y)$ is symmetric,
 then conditions~\eqref{eqn:assumpx} and \eqref{eqn:assumpy} are equivalent.

Let $g(x)\in L^2(I)$
 and let $\mathbf{u}:\Rset\to L^2(I)$ stand for an $L^2(I)$-valued function.
We have the following on the existence and  uniqueness of solutions
 to the initial value problem (IVP) of the CL \eqref{eqn:csys0}
 (see Theorem~2.1 of \cite{IY23}).
 
\begin{thm}
\label{thm:2a}
There exists a unique solution $\mathbf{u}(t)\in C^1(\Rset,L^2(I))$
 to the IVP of \eqref{eqn:csys0} with
\[
u(0,x)=g(x).
\]
Moreover, the solution depends continuously on $g$.
\end{thm}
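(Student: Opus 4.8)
The plan is to recast the IVP as an abstract ordinary differential equation $\d\mathbf{u}/\d t=F(\mathbf{u})$ in the Banach space $L^2(I)$, where the vector field is
\[
F(\mathbf{u})(x)=\omega(x)+K\int_I W(x,y)\sin\bigl(u(y)-u(x)\bigr)\,\d y,
\]
and then to invoke the Picard--Lindel\"of existence-uniqueness theory for ODEs in Banach spaces. The first task is to verify that $F$ maps $L^2(I)$ into itself. Since $|\sin|\le 1$, assumption~\eqref{eqn:assumpy} gives the pointwise bound $|F(\mathbf{u})(x)-\omega(x)|\le K\int_I|W(x,y)|\,\d y\le KC_2$ for a.e.\ $x\in I$, so the nonlinear part lies in $L^\infty(I)\subset L^2(I)$, and $F(\mathbf{u})\in L^2(I)$ because $\omega\in L^2(I)$.

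The crucial step is to show that $F$ is globally Lipschitz on $L^2(I)$. Using $|\sin\theta_1-\sin\theta_2|\le|\theta_1-\theta_2|$ with $\theta_1=u(y)-u(x)$ and $\theta_2=v(y)-v(x)$, I would bound
\[
\bigl|F(\mathbf{u})(x)-F(\mathbf{v})(x)\bigr|
\le K\int_I|W(x,y)|\bigl(|u(y)-v(y)|+|u(x)-v(x)|\bigr)\,\d y.
\]
The second term contributes $K|u(x)-v(x)|\int_I|W(x,y)|\,\d y\le KC_2|u(x)-v(x)|$ by \eqref{eqn:assumpy}, whose $L^2$ norm is at most $KC_2\|\mathbf{u}-\mathbf{v}\|_{L^2(I)}$. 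The first term is the integral operator with nonnegative kernel $|W(x,y)|$ applied to $|u-v|$; by the Schur test, conditions~\eqref{eqn:assumpx} and \eqref{eqn:assumpy} bound its $L^2\to L^2$ operator norm by $\sqrt{C_1C_2}$. Combining these yields the global Lipschitz estimate
\[
\|F(\mathbf{u})-F(\mathbf{v})\|_{L^2(I)}\le K\bigl(\sqrt{C_1C_2}+C_2\bigr)\|\mathbf{u}-\mathbf{v}\|_{L^2(I)}=:L\,\|\mathbf{u}-\mathbf{v}\|_{L^2(I)}.
\]
I expect this Schur-type bound to be the main (and essentially the only genuinely analytic) obstacle; everything afterward is the standard Banach-space ODE machinery.

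With the global Lipschitz bound in hand, I would pass to the equivalent integral equation $\mathbf{u}(t)=g+\int_0^t F(\mathbf{u}(s))\,\d s$ and show that the associated Picard map is a contraction on $C([-T,T],L^2(I))$ --- either by choosing $T$ small and reiterating, or directly on any interval via an exponentially weighted (Bielecki) norm. This produces a unique continuous solution, and because the Lipschitz constant $L$ is uniform and state-independent, the local solution extends to all of $\Rset$. Since $t\mapsto F(\mathbf{u}(t))$ is then continuous (by the Lipschitz continuity of $F$ composed with the continuity of $\mathbf{u}$), the integral representation shows $\mathbf{u}\in C^1(\Rset,L^2(I))$ with $\mathbf{u}'(t)=F(\mathbf{u}(t))$. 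Finally, for continuous dependence, if $\mathbf{u},\mathbf{v}$ solve the IVP with data $g,h$, subtracting the integral equations and applying the Lipschitz estimate gives $\|\mathbf{u}(t)-\mathbf{v}(t)\|_{L^2(I)}\le\|g-h\|_{L^2(I)}+L\int_0^{|t|}\|\mathbf{u}(s)-\mathbf{v}(s)\|_{L^2(I)}\,\d s$, and Gronwall's inequality yields $\|\mathbf{u}(t)-\mathbf{v}(t)\|_{L^2(I)}\le e^{L|t|}\|g-h\|_{L^2(I)}$, which is the asserted (in fact locally Lipschitz) continuous dependence on $g$.
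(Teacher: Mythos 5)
Your proposal is correct and follows essentially the same route as the paper, which does not reprove the statement here but cites Theorem~2.1 of \cite{IY23}: there, too, the right-hand side is treated as a vector field on $L^2(I)$, shown to be globally Lipschitz using the bounds \eqref{eqn:assumpx} and \eqref{eqn:assumpy} (your Schur-test estimate), and the conclusion follows from the standard Picard--Lindel\"of theory in Banach spaces together with a Gronwall argument for continuous dependence. Your global-in-time extension and the $C^1$ regularity via the integral equation are exactly the standard steps invoked there as well.
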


We next consider the IVP of the KM \eqref{eqn:dsys}
 and turn to the issue on convergence of solutions in \eqref{eqn:dsys}
 to those in the CL \eqref{eqn:csys0}.
Since the right-hand side of \eqref{eqn:dsys} is Lipschitz continuous in $u_i^n$, $i\in[n]$,
 we see by a fundamental result of ordinary differential equations (ODEs)
 (e.g., Theorem~2.1 in Chapter~1 of \cite{CL55})
 that the IVP of \eqref{eqn:dsys} has a unique solution.
Given a solution $u_n(t)=(u_1^n(t),\ldots, u_n^n(t))$ to the IVP of \eqref{eqn:dsys},
 we define an $L^2(I)$-valued function $\mathbf{u}_n:\Rset\to L^2(I)$ as
\begin{equation*}
\mathbf{u}_n(t) = \sum^{n}_{j=1} u_j^n(t) \mathbf{1}_{I_j^n},
\end{equation*}
where $\mathbf{1}_{I_j^n}$ represents the characteristic function of $I_j^n$ for $j\in[n]$.
Let $\|\cdot\|$ denote the norm in $L^2(I)$.
In our setting as stated in Section~1,
 we slightly modify arguments given in the proof of Theorem~2.3 of \cite{IY23}
 to obtain the following.

\begin{thm}
\label{thm:2b}
If $\mathbf{u}_n(t)$ is the solution to the IVP of \eqref{eqn:dsys} such that
\[
\lim_{n\to\infty}\|\mathbf{u}_n(0)-\mathbf{u}(0)\|=0\quad\mbox{a.s.},
\]
then for any $\tau > 0$ we have
\[
\lim_{n \rightarrow \infty}\max_{t\in[0,\tau]}
 \|\mathbf{u}_n(t)-\mathbf{u}(t)\|=0\quad\mbox{a.s.},
\]
where $\mathbf{u}(t)$ represents the solution
 to the IVP of the CL \eqref{eqn:csys0}.
\end{thm}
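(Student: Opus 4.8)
The plan is to control the $L^2(I)$ error $\mathbf{e}_n(t):=\mathbf{u}_n(t)-\mathbf{u}(t)$ by a Gronwall argument. First I would write the step function $\mathbf{u}_n(t)$ as the solution of an integral equation read off from \eqref{eqn:dsys}: for $x\in I_i^n$,
\[
\mathbf{u}_n(t)(x)=\mathbf{u}_n(0)(x)+\int_0^t\Big(\omega_i^n+\frac{K}{n\alpha_n}\sum_{j=1}^n w_{ij}^n\sin\big(u_j^n(s)-u_i^n(s)\big)\Big)\d s,
\]
and compare it with the integral form of \eqref{eqn:csys0}. Writing $F[\mathbf{u}](x)=\omega(x)+K\int_I W(x,y)\sin(u(y)-u(x))\d y$ for the right-hand side operator of the CL and $F_n$ for the piecewise constant right-hand side of \eqref{eqn:dsys}, the idea is to insert $F[\mathbf{u}_n]$ as an intermediate quantity and estimate
\[
\|\mathbf{e}_n(t)\|\le\|\mathbf{e}_n(0)\|+\int_0^t\big(\|F_n[\mathbf{u}_n(s)]-F[\mathbf{u}_n(s)]\|+\|F[\mathbf{u}_n(s)]-F[\mathbf{u}(s)]\|\big)\d s.
\]

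The second integrand is the Gronwall term. Since $\sin$ is globally Lipschitz with constant one and $W$ obeys the integrability bounds \eqref{eqn:assumpx}--\eqref{eqn:assumpy}, the convolution-type map $\mathbf{u}\mapsto\int_I W(\cdot,y)\sin(u(y)-u(\cdot))\d y$ is Lipschitz on $L^2(I)$, so $\|F[\mathbf{u}_n(s)]-F[\mathbf{u}(s)]\|\le L\|\mathbf{e}_n(s)\|$ for some $L=L(K,C_1,C_2)$. I would then split the first integrand, the consistency error, into three contributions: (i) the frequency discretization $\sum_j\omega_j^n\mathbf{1}_{I_j^n}-\omega$, which tends to $0$ in $L^2(I)$ because the values \eqref{eqn:omega} are the conditional expectations of $\omega$ onto the partition $\{I_i^n\}$ and hence converge to $\omega\in L^2(I)$; (ii) the deterministic graphon quadrature error, comparing the Riemann-sum kernel $\langle W\rangle^n_{ij}$ against $W$, which again vanishes by $L^2$ approximation of $W$ together with the uniform boundedness of $\sin$; and (iii) for random graphs only, the fluctuation of the random entries $w_{ij}^n$ about their means $\langle W\rangle_{ij}^n$ (dense) or $\alpha_n\langle\tilde W_n\rangle_{ij}^n$ (sparse). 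Collecting the deterministic pieces (i)--(ii) and $\|\mathbf{e}_n(0)\|$ into a sequence $\delta_n\to0$ and applying Gronwall's inequality then yields $\max_{t\in[0,\tau]}\|\mathbf{e}_n(t)\|\le\delta_n e^{L\tau}\to0$.

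The hard part will be contribution (iii), the almost-sure control of the random coupling. Here the entries $w_{ij}^n$ are independent Bernoulli variables with the success probabilities prescribed by \eqref{eq:ddr} and \eqref{eq:sdr}, so for each fixed $i$ the centered normalized sum $\frac{K}{n\alpha_n}\sum_j(w_{ij}^n-\Eset w_{ij}^n)\sin(\cdots)$ concentrates around $0$. I would bound its tail by a Hoeffding/Azuma-type inequality, noting that the increments are of size $O(1/(n\alpha_n))$ and the variance is of order $(n\alpha_n^2)^{-1}=n^{2\gamma-1}$, which tends to $0$ precisely because $\gamma\in(0,\tfrac12)$ keeps $n\alpha_n^2\to\infty$ (and $\alpha_n\equiv1$ in the dense case). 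A union bound over $i\in[n]$ then controls the $L^2(I)$ norm $\|F_n[\mathbf{u}_n]-\Eset F_n[\mathbf{u}_n]\|$ by a tail that is summable in $n$, so Borel--Cantelli gives almost-sure convergence to $0$. This is exactly where the construction \eqref{eq:ddr}--\eqref{eq:sdr} and the range $\gamma\in(0,\tfrac12)$ enter, and it amounts to the concentration estimate of \cite{M19} adapted to the scaling $1/(n\alpha_n)$ in \eqref{eqn:dsys}; the remainder of the argument follows the deterministic line of Theorem~2.3 in \cite{IY23}.
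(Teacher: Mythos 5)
Your overall skeleton (integral forms, a consistency error plus a Lipschitz/Gronwall term controlled via \eqref{eqn:assumpx}--\eqref{eqn:assumpy}, concentration plus Borel--Cantelli for the random graphs) is the right one: the paper gives no self-contained argument here but invokes the proof of Theorem~2.3 of \cite{IY23}, which follows exactly this Medvedev-type scheme, and your treatment of the deterministic contributions (i)--(ii) and your identification of where $\gamma\in(0,\tfrac{1}{2})$ enters are correct.

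The genuine gap is in contribution (iii). You apply Hoeffding/Azuma to $\frac{K}{n\alpha_n}\sum_{j}(w_{ij}^n-\Eset w_{ij}^n)\sin\bigl(u_j^n(s)-u_i^n(s)\bigr)$, where the $\sin$ factors are evaluated along $\mathbf{u}_n(s)$, the solution of the \emph{random} system. These coefficients are functionals of the entire random weight matrix, so for fixed $i$ the summands are neither independent in $j$ nor a martingale difference sequence with respect to any natural filtration; Hoeffding and Azuma are simply not applicable. Nor can you decouple by taking a supremum over coefficient vectors $a\in[-1,1]^n$, since $\sup_{a}\bigl|\frac{1}{n\alpha_n}\sum_j(w_{ij}^n-\Eset w_{ij}^n)a_j\bigr|=\frac{1}{n\alpha_n}\sum_j|w_{ij}^n-\Eset w_{ij}^n|$, which stays of order one rather than vanishing. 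The cited proof avoids this circularity by inserting the intermediate discrete system $\mathbf{v}_n$ with the averaged, deterministic weights $\langle W\rangle_{ij}^n$ (resp.\ $\alpha_n\langle\tilde{W}_n\rangle_{ij}^n$): one first shows $\mathbf{v}_n\to\mathbf{u}$ (purely deterministic, your steps (i)--(ii)), and then compares $\mathbf{u}_n$ with $\mathbf{v}_n$; in that comparison the centered weights multiply $\sin\bigl(v_j^n(s)-v_i^n(s)\bigr)$, i.e.\ deterministic coefficients, so row-wise independence of $(w_{ij}^n)_{j}$ suffices for the concentration step (this is precisely why the paper remarks after the theorem that only independence in $j$ for each $i$ is needed, covering undirected graphs), while the leftover term $\frac{K}{n\alpha_n}\sum_j w_{ij}^n\bigl[\sin(u_j^n-u_i^n)-\sin(v_j^n-v_i^n)\bigr]$ is absorbed into Gronwall after a separate, coefficient-free Chernoff bound on the row sums $\frac{1}{n\alpha_n}\sum_j w_{ij}^n$. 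A secondary point: even with deterministic coefficients, your tail bound is pointwise in $s$, whereas Gronwall needs it uniformly (or in integrated form) on $[0,\tau]$; this requires an additional step, e.g.\ a union bound over a time grid combined with the uniform-in-$n$ Lipschitz continuity in $t$ of the discrete solutions.
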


Note that the statements of Theorem~\ref{thm:2b} and the remainings are valid
 for undirected random graphs 
 since $w_{ij}^n$, $i,j\in[n]$, may only be assumed
 to be independently distributed in $j\in[n]$ for each $i\in[n]$
 in the proof of Theorem~2.3 of \cite{IY23}.

For $\theta\in\Sset^1$,
 let $\boldsymbol{\theta}$ represent the constant function $u=\theta$ in $L^2(I)$.
If $\bar{\mathbf{u}}_n(t)$ is a solution to the KM \eqref{eqn:dsys},
 then so is $\bar{\mathbf{u}}_n(t)+\boldsymbol{\theta}$ for any $\theta\in\Sset^1$.
Similarly, if $\bar{\mathbf{u}}(t)$ is a solution to the CL \eqref{eqn:csys0},
 then so is $\bar{\mathbf{u}}(t)+\boldsymbol{\theta}$ for any $\theta\in\Sset^1$.
Let $\U_n=\{\bar{\mathbf{u}}_n(t)+\boldsymbol{\theta}\mid\theta\in\Sset^1\}$
 and $\U=\{\bar{\mathbf{u}}(t)+\boldsymbol{\theta}\mid\theta\in\Sset^1\}$
 denote the one-parameter families of solutions to \eqref{eqn:dsys} and \eqref{eqn:csys0}, respectively.
We say that $\U_n$ (resp. $\U$) is \emph{stable}
 if solutions starting in its (smaller) neighborhood
 remain in its (larger) neighborhood for $t\ge 0$,
 and \emph{asymptotically stable} if $\U_n$ (resp. $\U$) is stable
 and the distance between such solutions and $\U_n$ (resp. $\U$) in $ L^2(I)$
 converges to zero as $t\to\infty$.
We also obtain the following result from Theorem~2.4 of \cite{Y24a}.

\begin{thm}
\label{thm:2c}
Suppose that the KM\eqref{eqn:dsys} and CL \eqref{eqn:csys0}
 have solutions $\bar{\mathbf{u}}_n(t)$ and $\bar{\mathbf{u}}(t)$, respectively, such that
\begin{equation}
\lim_{n\to\infty}\|\bar{\mathbf{u}}_n(t)-\bar{\mathbf{u}}(t)\|=0\quad\mbox{a.s.}
\label{eqn:thm2c}
\end{equation}
for any $t\in[0,\infty)$.
Then the following hold$:$
\begin{enumerate}
\setlength{\leftskip}{-1.8em}
\item[\rm(i)]
If $\U_n$ is stable $($resp. asymptotically stable$)$ for $n>0$ sufficiently large,
 then $\U$ is also stable $($resp. asymptotically stable$)$.
\item[\rm(ii)]
If $\U$ is stable, then for any $\epsilon,\tau>0$ there exists $\delta>0$ such that
 if $\mathbf{u}_n(t)$ is a solution to the KM \eqref{eqn:dsys} satisfying
\[
\min_{\theta\in\Sset^1}
 \|\mathbf{u}_n(0)-\bar{\mathbf{u}}_n(0)-\boldsymbol\theta\|<\delta\quad\mbox{a.s.},
\]
then
\[
\min_{\theta\in\Sset^1}
 \|\mathbf{u}_n(t)-\bar{\mathbf{u}}_n(t)-\boldsymbol\theta\|<\epsilon\quad\mbox{a.s.}
\]
for any $t\in[0,\tau]$.
Moreover, if $\U$ is asymptotically stable, then
\[
\lim_{t\to\infty}\lim_{n\to\infty}\min_{\theta\in\Sset^1}
 \|\mathbf{u}_n(t)-\bar{\mathbf{u}}_n(t)-\boldsymbol\theta\|=0\quad\mbox{a.s.}
\]
\end{enumerate}
\end{thm}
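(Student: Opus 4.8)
The plan is to treat both parts as transfers between the finite system \eqref{eqn:dsys} and its limit \eqref{eqn:csys0} along the convergence supplied by Theorem~\ref{thm:2b}, combined with the hypothesis \eqref{eqn:thm2c} that the reference families themselves converge. Throughout I abbreviate the distances to the two moving families by
\[
\rho_t(\mathbf{v}):=\min_{\theta\in\Sset^1}\|\mathbf{v}-\bar{\mathbf{u}}(t)-\boldsymbol\theta\|,\qquad
\rho_{n,t}(\mathbf{v}):=\min_{\theta\in\Sset^1}\|\mathbf{v}-\bar{\mathbf{u}}_n(t)-\boldsymbol\theta\|.
\]
Two preliminary observations drive everything. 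First, since $\rho_t$ and $\rho_{n,t}$ are $1$-Lipschitz in $\mathbf{v}$ and differ by at most $\|\bar{\mathbf{u}}_n(t)-\bar{\mathbf{u}}(t)\|$, the hypothesis \eqref{eqn:thm2c} lets me replace $\rho_{n,t}$ by $\rho_t$ up to an error that is $o(1)$ a.s.\ for each fixed $t$; applying Theorem~\ref{thm:2b} to $\bar{\mathbf{u}}_n$ and $\bar{\mathbf{u}}$ (whose data agree in the limit by \eqref{eqn:thm2c} at $t=0$) upgrades this to uniform $o(1)$ on any compact $[0,\tau]$. Second, I will use the Gronwall estimate underlying the proof of Theorem~\ref{thm:2b} in the slightly stronger form that, if $\mathbf{v}_n$ denotes the solution of \eqref{eqn:csys0} issued from the \emph{same} datum $\mathbf{u}_n(0)$ as a given solution $\mathbf{u}_n$ of \eqref{eqn:dsys}, then $\max_{t\in[0,\tau]}\|\mathbf{u}_n(t)-\mathbf{v}_n(t)\|\to0$ a.s.; this is what that proof delivers once the initial-data mismatch is set to zero, since the residual error only measures how well \eqref{eqn:dsys} discretizes \eqref{eqn:csys0} and is uniform over solutions because $\sin$ is globally Lipschitz.

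For part~(ii), finite-time direction, fix $\epsilon,\tau>0$ and use stability of $\U$ to pick $\delta_1>0$ with $\rho_0(\mathbf{w})<\delta_1\Rightarrow\rho_t(\mathbf{w})<\epsilon/3$ for all $t\ge0$. Given a solution $\mathbf{u}_n$ of \eqref{eqn:dsys} with $\rho_{n,0}(\mathbf{u}_n(0))<\delta$, let $\mathbf{v}_n$ be the solution of \eqref{eqn:csys0} from $\mathbf{u}_n(0)$. Then $\rho_0(\mathbf{v}_n(0))\le\rho_{n,0}(\mathbf{u}_n(0))+\|\bar{\mathbf{u}}_n(0)-\bar{\mathbf{u}}(0)\|<\delta+o(1)$, so choosing $\delta<\delta_1/2$ gives $\rho_0(\mathbf{v}_n(0))<\delta_1$ for $n$ large, whence $\rho_t(\mathbf{v}_n(t))<\epsilon/3$ on $[0,\tau]$. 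The triangle inequality
\[
\rho_{n,t}(\mathbf{u}_n(t))\le\|\mathbf{u}_n(t)-\mathbf{v}_n(t)\|+\rho_t(\mathbf{v}_n(t))+\|\bar{\mathbf{u}}(t)-\bar{\mathbf{u}}_n(t)\|,
\]
whose first and third terms are $<\epsilon/3$ uniformly on $[0,\tau]$ for $n$ large by the two preliminary observations, then yields $\rho_{n,t}(\mathbf{u}_n(t))<\epsilon$. For the asymptotic clause I evaluate the iterated limit from the inside out: when the data converge, $\mathbf{u}_n(0)\to\mathbf{g}$, continuous dependence (Theorem~\ref{thm:2a}) gives $\mathbf{v}_n(t)\to\mathbf{v}(t)$ for the solution $\mathbf{v}$ of \eqref{eqn:csys0} from $\mathbf{g}$, and the same triangle inequality shows $\lim_{n\to\infty}\rho_{n,t}(\mathbf{u}_n(t))=\rho_t(\mathbf{v}(t))$ for each fixed $t$; asymptotic stability of $\U$ then sends this to $0$ as $t\to\infty$.

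For part~(i) I run the bridge in reverse: to test stability of $\U$, take a solution $\mathbf{u}$ of \eqref{eqn:csys0} with $\rho_0(\mathbf{u}(0))$ small, discretize its datum so that $\mathbf{u}_n(0)\to\mathbf{u}(0)$, run \eqref{eqn:dsys} to obtain $\mathbf{u}_n$ with $\mathbf{u}_n(t)\to\mathbf{u}(t)$ on compacts by Theorem~\ref{thm:2b}, note $\rho_{n,0}(\mathbf{u}_n(0))\to\rho_0(\mathbf{u}(0))$ is small, invoke stability of $\U_n$ to keep $\mathbf{u}_n(t)$ near $\U_n$, and pass to the limit in $n$ at each fixed $t$ to conclude that $\mathbf{u}(t)$ stays near $\U$; the asymptotically stable case adds attraction by the analogous limiting argument.

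I expect the genuine obstacle to lie precisely in this reverse direction. The definition of stability of $\U_n$ supplies, for each target radius, a smaller radius $r_n$ that is a priori $n$-dependent, and the argument collapses if $r_n\to0$ faster than the convergences can take effect, because the order of quantifiers forces me to fix the test solution—hence its initial smallness—before sending $n\to\infty$. Resolving this requires a \emph{uniform} stability of the $\U_n$ for large $n$, which is what one should extract from the fact that \eqref{eqn:dsys} is a discretization of \eqref{eqn:csys0} (for instance, a spectral gap of the linearization of \eqref{eqn:csys} producing an $n$-uniform gap for the linearization of \eqref{eqn:dsys}), together with the interchange of $t\to\infty$ and $n\to\infty$ in the asymptotic case, where Theorem~\ref{thm:2b} controls only finite horizons. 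These two points, rather than any of the triangle-inequality bookkeeping, are where the real work lies.
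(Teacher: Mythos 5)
Your overall plan---bridging between \eqref{eqn:dsys} and \eqref{eqn:csys0} through Theorem~\ref{thm:2b}---is aligned with the paper, which does not reprove this theorem at all: it imports it from Theorem~2.4 of \cite{Y24a}, and Remark~\ref{rmk:2a}(ii) records that the cited proof relies on nothing beyond the convergence statement extended here as Theorem~\ref{thm:2b}. The genuine gap in your attempt sits at its very first load-bearing step, the ``slightly stronger form'' of Theorem~\ref{thm:2b}. You compare a KM solution $\mathbf{u}_n$ with the CL solution $\mathbf{v}_n$ issued from the \emph{same} datum $\mathbf{u}_n(0)$, and claim the error is $o(1)$ \emph{uniformly over data} because ``$\sin$ is globally Lipschitz.'' That reasoning is valid only for deterministic dense graphs, where the residual $\int_I[W_n(x,y)-W(x,y)]\sin(\cdot)\,\d y$ is controlled by $\|W_n-W\|_{L^2(I^2)}\to0$ and $|\sin|\le 1$. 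For the random dense and sparse graphs covered by the theorem, the step kernel built from $w_{ij}^n\in\{0,1\}$ does \emph{not} converge to $W$ in $L^2(I^2)$ (for $W\equiv p$ the $L^2$ distance stays near $\sqrt{p(1-p)}$ a.s.); smallness of the residual comes only from cancellation in $\frac{1}{n\alpha_n}\sum_j(w_{ij}^n-\bar{w}_{ij}^n)\sin(\cdot)$, i.e.\ from concentration, and in the proof behind Theorem~\ref{thm:2b} concentration is applied along a reference trajectory that is independent of the realized graph. Your $\mathbf{v}_n$ is generated by $\mathbf{u}_n(0)$, which the theorem allows to be correlated with the graph, so that argument does not transfer; making your claim rigorous would require genuinely new input (e.g.\ operator-norm bounds on the centered adjacency matrix, uniform over $\Tset^n$), which you neither supply nor cite. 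The uniformity problem can instead be avoided entirely, in the spirit of the cited proof: use the $\Sset^1$-equivariance to replace $\mathbf{u}_n$ by $\mathbf{u}_n-\boldsymbol{\theta}_n$ with $\boldsymbol{\theta}_n$ the minimizing rotation, and drive $\delta\to0$ by a contradiction/diagonalization argument so that the rotated data converge to $\bar{\mathbf{u}}(0)$; then Theorem~\ref{thm:2b} is only ever invoked against the fixed solution $\bar{\mathbf{u}}(t)$, in combination with \eqref{eqn:thm2c}, and no same-datum or uniform-in-data version is needed.

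Two further gaps. In the asymptotic clause of (ii) you assume ``the data converge, $\mathbf{u}_n(0)\to\mathbf{g}$,'' but the hypothesis only gives $\min_{\theta\in\Sset^1}\|\mathbf{u}_n(0)-\bar{\mathbf{u}}_n(0)-\boldsymbol\theta\|<\delta$; no convergence of the perturbed data is available, so your inner limit need not exist and your argument proves a strictly weaker statement. And part (i) you do not prove at all: you correctly identify the quantifier obstruction (the stability radii of $\U_n$ may shrink with $n$), but your proposed remedy---extracting an $n$-uniform modulus from a spectral gap of the linearized CL---is circular here, since stability of the CL family is exactly the conclusion of (i), not a hypothesis, and nothing in the hypotheses supplies such a gap. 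As written, the proposal establishes at best the finite-time part of (ii) for deterministic dense graphs, and leaves part (i) and both asymptotic limits open.
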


\begin{rmk}\
\label{rmk:2a}
\begin{enumerate}
\setlength{\leftskip}{-1.8em}
\item[\rm(i)]
In the definition of stability and asymptotic stability of solutions to the CL \eqref{eqn:csys0},
 we cannot distinguish two solutions that are different only in a set with the Lebesgue measure zero.
\item[\rm(ii)]
In Theorem~$2.3$ of {\rm\cite{Y24a}},
 only complete simple graphs were treated
 but Theorem~{\rm\ref{thm:2c}} can be proven similarly
 since its proof relies only on Theorem~$2.2$ of {\rm\cite{Y24a}},
 which was extended to \eqref{eqn:dsys} and \eqref{eqn:csys0}
 in Theorem~{\rm\ref{thm:2b}}.
The same is true for Corollary~$\ref{cor:2a}$
 and Theorems~$\ref{thm:2d}$ and $\ref{thm:2e}$ below.
\end{enumerate}
\end{rmk}

Without assuming the existence of the solutions $\bar{\mathbf{u}}_n(t)$
 to the KM \eqref{eqn:dsys} satisfying \eqref{eqn:thm2c},
 we have the following result as a corollary of  Theorem~\ref{thm:2c}
 (see Corollary~2.6 of \cite{Y24a}).

\begin{cor}
\label{cor:2a}
If $\U$ is stable in the CL \eqref{eqn:csys0},
 then for any $\epsilon,\tau>0$ there exists $\delta>0$ such that
 for $n>0$ sufficiently large, if $\mathbf{u}_n(t)$ is any solution to the KM \eqref{eqn:dsys} satisfying
\[
\min_{\theta\in\Sset^1}\|\mathbf{u}_n(0)-\bar{\mathbf{u}}(0)-\boldsymbol\theta\|<\delta\quad\mbox{a.s.},
\]
then
\[
\min_{\theta\in\Sset^1}
 \|\mathbf{u}_n(t)-\bar{\mathbf{u}}(t)-\boldsymbol\theta\|<\epsilon\quad\mbox{a.s.}
\]
Moreover, if $\U$ is asymptotically stable, then
\[
\lim_{t\to\infty}\lim_{n\to\infty}\min_{\theta\in\Sset^1}
 \|\mathbf{u}_n(t)-\bar{\mathbf{u}}(t)-\boldsymbol\theta\|=0\quad\mbox{a.s.}
\]
\end{cor}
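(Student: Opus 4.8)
The plan is to transfer the stability of the CL family $\U$ to the KM \eqref{eqn:dsys} by comparing each KM solution with an auxiliary CL solution issued from the \emph{same} initial datum, and then invoking Theorems~\ref{thm:2a} and \ref{thm:2b}. Throughout, stability is a deterministic property of the CL \eqref{eqn:csys0}, while the ``a.s.'' clauses enter only through the graph‑realization in Theorem~\ref{thm:2b}, so the whole argument runs on the almost‑sure event on which the conclusion of Theorem~\ref{thm:2b} holds.

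For the first (finite‑time) assertion, suppose $\U$ is stable and fix $\epsilon,\tau>0$. First I would use the stability of $\U$ to produce $\delta>0$ such that every CL solution $\mathbf{v}(t)$ with $\min_{\theta\in\Sset^1}\|\mathbf{v}(0)-\bar{\mathbf{u}}(0)-\boldsymbol\theta\|<\delta$ satisfies $\min_{\theta\in\Sset^1}\|\mathbf{v}(t)-\bar{\mathbf{u}}(t)-\boldsymbol\theta\|<\epsilon/2$ for all $t\ge 0$. Given a KM solution $\mathbf{u}_n(t)$ fulfilling the hypothesis for this $\delta$, I would let $\mathbf{v}_n(t)$ denote the solution of \eqref{eqn:csys0} with $\mathbf{v}_n(0)=\mathbf{u}_n(0)$, viewed as a step function in $L^2(I)$, which exists and is unique by Theorem~\ref{thm:2a}. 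Since $\mathbf{v}_n(0)=\mathbf{u}_n(0)$ lies within $\delta$ of the orbit, stability gives $\min_{\theta\in\Sset^1}\|\mathbf{v}_n(t)-\bar{\mathbf{u}}(t)-\boldsymbol\theta\|<\epsilon/2$ for all $t\ge 0$. Because the two solutions share the same initial datum, Theorem~\ref{thm:2b} (more precisely, the uniform Gr\"onwall estimate underlying its proof, whose error depends only on the graphon discretization and not on the particular solution) yields $\max_{t\in[0,\tau]}\|\mathbf{u}_n(t)-\mathbf{v}_n(t)\|\to0$ a.s., so this maximum is $<\epsilon/2$ a.s.\ for $n$ large. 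Minimizing over $\theta$ before applying the triangle inequality then gives
\[
\min_{\theta\in\Sset^1}\|\mathbf{u}_n(t)-\bar{\mathbf{u}}(t)-\boldsymbol\theta\|
\le\|\mathbf{u}_n(t)-\mathbf{v}_n(t)\|
+\min_{\theta\in\Sset^1}\|\mathbf{v}_n(t)-\bar{\mathbf{u}}(t)-\boldsymbol\theta\|<\epsilon
\]
on $[0,\tau]$ a.s., as required.

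For the asymptotic assertion, with $\U$ asymptotically stable, I would retain the same auxiliary solutions $\mathbf{v}_n$ and use the displayed inequality above. For each fixed $t$ the first term on the right tends to $0$ a.s.\ as $n\to\infty$ by the same uniform estimate, so the inner limit $\lim_{n\to\infty}\min_{\theta\in\Sset^1}\|\mathbf{u}_n(t)-\bar{\mathbf{u}}(t)-\boldsymbol\theta\|$ is controlled by $\lim_{n\to\infty}\min_{\theta\in\Sset^1}\|\mathbf{v}_n(t)-\bar{\mathbf{u}}(t)-\boldsymbol\theta\|$; the asymptotic stability of $\U$ should then drive this quantity to $0$ as $t\to\infty$, delivering the stated iterated limit.

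The hard part will be exactly this last step. The comparison solution $\mathbf{v}_n$ depends on $n$, its initial datum being known only to lie within $\delta$ of the orbit and not to converge as $n\to\infty$, so one can neither apply Theorem~\ref{thm:2b} against a single fixed limiting CL solution nor naively interchange $n\to\infty$ and $t\to\infty$. What makes the argument close is the ordering prescribed in the statement (inner $n$, outer $t$) together with the fact that the convergence in Theorem~\ref{thm:2b} is uniform over the choice of shared initial data: for each fixed $t$ the $n$‑limit collapses the KM solution onto the CL comparison, and the uniform‑in‑$n$ confinement bound $\min_{\theta\in\Sset^1}\|\mathbf{v}_n(t)-\bar{\mathbf{u}}(t)-\boldsymbol\theta\|<\epsilon/2$ from the first part supplies the control needed to let $t\to\infty$ afterwards. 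I expect the technical care to lie in making this uniformity explicit, precisely as in the proof of Theorem~\ref{thm:2c} on which the corollary is modelled.
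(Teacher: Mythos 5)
Your overall strategy---comparing each KM solution with the CL solution issued from the same step-function initial datum, and transferring stability through that comparison---is not the paper's route: the paper obtains Corollary~\ref{cor:2a} as a direct consequence of Theorem~\ref{thm:2c} (quoting Corollary~2.6 of \cite{Y24a}), in which the KM comparison solutions $\bar{\mathbf{u}}_n$ required by hypothesis~\eqref{eqn:thm2c} are supplied through Theorem~\ref{thm:2b}, so that the entire stability transfer is absorbed into the cited theorem. Your route re-proves that transfer, and in doing so it leans on a statement strictly stronger than Theorem~\ref{thm:2b}: that theorem compares the KM with \emph{one fixed} CL solution, whereas you need the Gr\"onwall error to vanish uniformly over a sequence of CL solutions $\mathbf{v}_n$ whose initial data vary with $n$ and never converge. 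For deterministic dense graphs this strengthening can indeed be extracted from the proof (the error is governed by the $L^2$ discretization errors of $\omega$ and $W$, together with the deviation of $\mathbf{v}_n(t)$ from the step functions, which vanishes at $t=0$ and obeys its own Gr\"onwall bound); but for random dense and sparse graphs the concentration estimates in that proof are taken along coefficients determined by the comparison solution, so your parenthetical claim that the error ``does not depend on the particular solution'' is doing real, unproved work there, and it becomes genuinely problematic if the initial data are allowed to depend on the realization of the graph.

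The decisive gap is in the asymptotic clause. Your triangle inequality reduces the iterated limit to $\lim_{t\to\infty}\limsup_{n\to\infty}\min_{\theta\in\Sset^1}\|\mathbf{v}_n(t)-\bar{\mathbf{u}}(t)-\boldsymbol{\theta}\|$, where the $\mathbf{v}_n$ are CL solutions starting within $\delta$ of $\U$ but otherwise unrelated as $n$ varies. Asymptotic stability, as defined in the paper, gives attraction of each \emph{individual} solution to $\U$ at a rate that may depend on the solution; it provides no time $T$ after which all the $\mathbf{v}_n$ are simultaneously close to $\U$. Your proposed fix---the ``uniform-in-$n$ confinement bound $<\epsilon/2$''---cannot close this: confinement at the fixed level $\epsilon/2$, with $\epsilon$ chosen at the outset, never improves to convergence to $0$ as $t\to\infty$. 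What is missing is uniform attraction over the $\delta$-neighborhood (or else the observation that the inner limit over $n$ can only exist when the $\mathbf{u}_n(0)$ converge modulo rotation, in which case there is a single limiting CL solution, Theorem~\ref{thm:2b} applies literally, and pointwise attraction suffices). That is precisely the content your sketch defers to ``technical care'', and it is what the paper obtains for free by quoting Theorem~\ref{thm:2c} rather than re-deriving it.
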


\begin{rmk}
\label{rmk:2b}
Corollary~{\rm\ref{cor:2a}} implies that
 $\U$ behaves as if it is an $($asymptotically$)$ stable family of solutions
 in the KM \eqref{eqn:dsys}, when $n>0$ is sufficiently large.
\end{rmk}

Finally, we have the following results from Theorems~2.7 and 2.9 of \cite{Y24a}.
 
\begin{thm}
\label{thm:2d}
Suppose that the hypothesis of Theorem~$\ref{thm:2c}$ holds.
Then the following hold$:$
\begin{enumerate}
\setlength{\leftskip}{-1.8em}
\item[\rm(i)]
If $\U_n$ is unstable for $n>0$ sufficiently large
 and no stable solution to the KM \eqref{eqn:dsys} converges to $\U$ a.s. as $n\to\infty$,
 then $\U$ is unstable$;$
\item[\rm(ii)]
If $\U$ is unstable, then so is $\U_n$ for $n>0$ sufficiently large.
\end{enumerate}
\end{thm}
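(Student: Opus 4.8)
The plan is to handle the two parts separately: part~(ii) is, up to one technical point, the contrapositive of Theorem~\ref{thm:2c}(i), whereas part~(i) needs a fresh contradiction argument in which the extra hypothesis plays an essential role.

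For part~(ii) I would argue by contradiction. Assume $\U$ is unstable and yet $\U_n$ is stable for infinitely many $n$, say along a subsequence $n_k\to\infty$. The point is that the proof of Theorem~\ref{thm:2c}(i) draws only on the $n\to\infty$ convergence furnished by Theorem~\ref{thm:2b} and on the a.s.\ convergence $\|\bar{\mathbf u}_n(t)-\bar{\mathbf u}(t)\|\to0$ assumed in \eqref{eqn:thm2c}; both persist verbatim along any subsequence. Hence stability of $\U_{n_k}$ along $n_k$ already forces $\U$ to be stable, contradicting the assumption, so in fact $\U_n$ must be unstable for \emph{all} sufficiently large $n$. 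The only care needed is to check that the argument of Theorem~\ref{thm:2c}(i) localizes to a subsequence, which is clear since it never invokes a property of the full sequence that a subsequence lacks.

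For part~(i) I would suppose, for contradiction, that $\U$ is stable and aim to contradict a hypothesis. Stability of $\U$ together with Theorem~\ref{thm:2c}(ii) yields finite-time stability of $\U_n$: for every $\epsilon,\tau>0$ there is $\delta>0$ such that any KM solution starting within $\delta$ of $\U_n$ remains within $\epsilon$ of $\U_n$ on $[0,\tau]$ for $n$ large. Because $\U_n$ is unstable for large $n$, solutions starting arbitrarily close to $\U_n$ nevertheless leave a fixed neighborhood, so the finite-time statement forces their escape times to diverge as the initial distance shrinks and $n\to\infty$: a bounded escape time with a fixed escape radius would, through Theorem~\ref{thm:2b} on the relevant finite interval, yield a CL solution starting on $\U$ that leaves a neighborhood of $\U$, which is impossible since such a solution stays on $\U$. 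The remaining and decisive step is to use these slowly destabilizing solutions---each shadowing $\U_n$ for a time $\tau_n\to\infty$---to manufacture, in the limit, a stable family of the KM converging to $\U$, in direct contradiction with the hypothesis that no such family exists.

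I expect this final conversion to be the main obstacle. The tension is that Theorem~\ref{thm:2b} controls the KM--CL distance only on finite time intervals, whereas stability of a KM family is an infinite-time property, so bridging the two requires a delicate diagonal extraction over $\tau_n\to\infty$ and $n\to\infty$. It is precisely here that the hypothesis ``no stable solution of the KM converges to $\U$'' is indispensable: it excludes the degenerate scenario in which the instability of $\U_n$ is absorbed by a nearby stable family limiting to $\U$, which is the only way a stable $\U$ could coexist with unstable $\U_n$. Ruling out that scenario closes the contradiction and yields the instability of $\U$.
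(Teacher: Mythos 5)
Your treatment of part (ii) is sound. The direct contrapositive of Theorem~\ref{thm:2c}(i) only yields instability of $\U_n$ along a subsequence, and your repair---assuming $\U_{n_k}$ stable along a subsequence and re-running the stability-transfer argument, which uses only the finite-time convergence of Theorem~\ref{thm:2b} and the hypothesis \eqref{eqn:thm2c} and therefore localizes to subsequences---is exactly the right one. For context, note that the paper itself gives no in-text proof of Theorem~\ref{thm:2d}: it is imported from Theorems~2.7 and 2.9 of \cite{Y24a}, with Remark~\ref{rmk:2a}(ii) justifying the transfer to the present setting on precisely the grounds you invoke, so for part (ii) you are on the intended track even though a line-by-line comparison is not possible here.

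Part (i), however, has a genuine gap. Once you assume $\U$ stable for contradiction, the entire burden of the proof is to exhibit a \emph{stable} solution family of the KM \eqref{eqn:dsys} converging to $\U$ a.s., so as to contradict the hypothesis that no such family exists. You name this step (``manufacture, in the limit, a stable family'') but never execute it, and the route you sketch does not obviously work: the slowly escaping solutions you extract are by construction \emph{not} stable---they leave a fixed neighborhood---and no diagonal extraction over $\tau_n\to\infty$ and $n\to\infty$ converts a sequence of escaping orbits into a Lyapunov-stable family. In the situation alluded to in Remark~\ref{rmk:2c}(i) (the example of \cite{Y24a} where $\U$ is asymptotically stable while every $\U_n$ is unstable), the stable KM solutions converging to $\U$ are distinct equilibrium families coexisting near $\U_n$, not limits of escaping orbits; locating such objects requires structure (gradient-like dynamics, convergence of orbits to equilibria, stability of the limiting equilibria) that your argument never touches. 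Worse, your closing assertion that absorption by a nearby stable family ``is the only way a stable $\U$ could coexist with unstable $\U_n$'' is precisely the contrapositive of part (i): asserting it is circular, not a proof. A smaller but real flaw in the same part: you speak of escape from ``a fixed neighborhood,'' but instability of $\U_n$ only provides an escape radius $\epsilon_n^*$ that may shrink as $n\to\infty$, and that degeneration must also be ruled out before your divergent-escape-time argument can even begin.
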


\begin{thm}
\label{thm:2e}
If $\U$ is unstable,
 then for any $\epsilon,\delta>0$ there exists $\tau>0$ such that for $n>0$ sufficiently large
\[
\min_{\theta\in\Sset^1}\|\mathbf{u}_n(\tau)-\bar{\mathbf{u}}(\tau)-\boldsymbol{\theta}\|
 >\epsilon\quad\mbox{a.s.},
\]
where $\mathbf{u}_n(t)$ is a solution to the KM \eqref{eqn:dsys} satisfying
\[
\min_{\theta\in\Sset^1}\|\mathbf{u}_n(0)-\bar{\mathbf{u}}(0)-\boldsymbol{\theta}\|<\delta\quad\mbox{a.s.}
\]
\end{thm}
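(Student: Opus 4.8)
The plan is to realise the instability of $\U$ at the level of the CL \eqref{eqn:csys0} through an explicit escaping solution, and then to transport that behaviour to the KM \eqref{eqn:dsys} by the a.s.\ finite-time convergence of Theorem~\ref{thm:2b}. It is convenient to abbreviate $\rho(\mathbf{w},t):=\min_{\theta\in\Sset^1}\|\mathbf{w}-\bar{\mathbf{u}}(t)-\boldsymbol\theta\|$ for the $L^2(I)$-distance of a state $\mathbf{w}$ to the orbit $\U$ at time $t$. Since the minimum runs over the compact group $\Sset^1$ and $\theta\mapsto\|\mathbf{w}-\bar{\mathbf{u}}(t)-\boldsymbol\theta\|$ is continuous, it is attained, and $\mathbf{w}\mapsto\rho(\mathbf{w},t)$ is $1$-Lipschitz on $L^2(I)$. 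Unwinding the definition of stability, the instability of $\U$ furnishes an escape radius $\epsilon_0>0$ such that for every $\delta>0$ there is a CL solution $\mathbf{v}(t)\in C^1(\Rset,L^2(I))$, provided by Theorem~\ref{thm:2a}, with $\rho(\mathbf{v}(0),0)<\tfrac{1}{2}\delta$ that satisfies $\rho(\mathbf{v}(\tau_0),\tau_0)\ge\epsilon_0$ at some escape time $\tau_0>0$.

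Fix $\epsilon,\delta>0$ and treat the nontrivial regime $\epsilon<\epsilon_0$. First I would select an escaping solution $\mathbf{v}$ as above and set $\tau:=\tau_0$, so that $\tau$ depends on $\epsilon,\delta$ only through this choice. Next I would manufacture initial data for the KM by discretising $\mathbf{v}(0)$, for instance $u_i^n(0)=n\int_{I_i^n}\mathbf{v}(0)\,\d x$ for $i\in[n]$, so that the piecewise-constant state $\mathbf{u}_n(0)$ obeys $\|\mathbf{u}_n(0)-\mathbf{v}(0)\|\to0$ deterministically, hence a.s. Theorem~\ref{thm:2b}, applied with $\mathbf{u}(t)=\mathbf{v}(t)$, then gives
\[
\lim_{n\to\infty}\max_{t\in[0,\tau]}\|\mathbf{u}_n(t)-\mathbf{v}(t)\|=0\quad\mbox{a.s.}
\]

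On the a.s.\ event where this convergence holds, the $1$-Lipschitz bound for $\rho(\cdot,t)$ yields, for all $n$ large, both
\[
\rho(\mathbf{u}_n(0),0)\le\rho(\mathbf{v}(0),0)+\|\mathbf{u}_n(0)-\mathbf{v}(0)\|<\delta
\]
and
\[
\rho(\mathbf{u}_n(\tau),\tau)\ge\rho(\mathbf{v}(\tau),\tau)-\|\mathbf{u}_n(\tau)-\mathbf{v}(\tau)\|\ge\epsilon_0-\|\mathbf{u}_n(\tau)-\mathbf{v}(\tau)\|>\epsilon,
\]
the last step being valid once $\|\mathbf{u}_n(\tau)-\mathbf{v}(\tau)\|<\epsilon_0-\epsilon$. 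This $\mathbf{u}_n$ is then the required KM solution: it starts within $\delta$ of $\U$ and lies beyond $\epsilon$ of $\U$ at the prescribed time $\tau$.

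The hard part is not the transport, which is a direct application of Theorem~\ref{thm:2b}, but the bookkeeping imposed by the $\Sset^1$-symmetry together with the time-dependence of the reference orbit: escape must be measured against the \emph{moving} family $\{\bar{\mathbf{u}}(\tau)+\boldsymbol\theta\mid\theta\in\Sset^1\}$ rather than a fixed point, and one must check that $\rho(\cdot,t)$ passes cleanly through the $L^2$-limit, which is exactly what the $1$-Lipschitz property secures. A secondary subtlety is reconciling the ``for any $\epsilon$'' clause with the single escape radius $\epsilon_0$ extracted from instability: the argument produces escape past every $\epsilon<\epsilon_0$, so one either restricts to this range or, when the unstable orbit repels trajectories to a distant attracting set, takes $\epsilon_0$ as large as that separation permits. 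Finally, the ``a.s.'' in the conclusion is inherited verbatim from Theorem~\ref{thm:2b}, so that for a.s.\ realisation of the random graph and frequencies there is a threshold beyond which the displayed escape occurs.
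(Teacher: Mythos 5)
Your proposal is correct and takes essentially the same approach as the paper: the paper obtains Theorem~\ref{thm:2e} by citing Theorem~2.9 of \cite{Y24a}, whose proof (as Remark~\ref{rmk:2a}(ii) indicates) relies only on the finite-time a.s.\ convergence result extended here as Theorem~\ref{thm:2b}, which is precisely your transport of an escaping CL solution to the KM via discretized initial data and the $1$-Lipschitz distance to the family $\U$. The restriction to $\epsilon<\epsilon_0$ that you flag is inherent in the theorem's phrasing rather than a defect of your argument relative to the paper's, since instability alone can only furnish escape up to its own escape radius.
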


\begin{rmk}\
\label{rmk:2c}
\begin{enumerate}
\setlength{\leftskip}{-1.8em}
\item[\rm(i)]
Only under the hypothesis of Theorem~{\rm\ref{thm:2d}},
 $\U$ is not necessarily unstable even if $\U_n$ is unstable.
Moreover, $\U$ may be asymptotically stable
 even if $\U_n$ is unstable for $n>0$ sufficiently large.
See {\rm\cite{Y24a}} for such an example.
\item[\rm(ii)]
Theorem~{\rm\ref{thm:2e}} implies that
 $\U$ behaves as if it is an unstable family of solutions
 in the KM \eqref{eqn:dsys}, when $n>0$ is sufficiently large.
\end{enumerate}
\end{rmk}

Thus, the relationship
 between the KM \eqref{eqn:dsys} and CL \eqref{eqn:csys0} is subtle.

\subsection{CL \eqref{eqn:csys} with the frequency function \eqref{eqn:lomega}}

We next outline the results of \cite{Y24a} on the CL \eqref{eqn:csys}
 with the frequency function \eqref{eqn:lomega}.
Here we assume that $a>0$.
A different treatment is needed for $a=0$
 (see, e.g., Proposition~3.2 of \cite{Y24b}).

As shown in \cite{IY23},
 the CL \eqref{eqn:csys} generally has the stationary solutions
\begin{equation}
u(t,x)=U(x)+\Omega t+\theta,\quad
U(x)=\arcsin\left(\frac{\omega(x)-\Omega}{pKC}\right),
\label{eqn:csol}
\end{equation}
where $\theta\in\Sset^1$ is an arbitrary constant,
 the range of the function $\arcsin$ is $[-\tfrac{1}{2}\pi,\tfrac{1}{2}\pi]$ and
\begin{equation}
\Omega=\int_I \omega(x)\d x,
\label{eqn:Omega}
\end{equation}
if there exists a constant $C>0$ such that
\begin{equation}
C=\int_I \sqrt{1-\left(\frac{\omega(x)-\Omega}{pKC}\right)^2}\d x.
\label{eqn:CU}
\end{equation}
A solution of the form \eqref{eqn:csol} to the CL \eqref{eqn:csys}
 was also obtained in \cite{E85}
 although the constant term 
 was not contained.

\begin{figure}
\includegraphics[scale=0.5]{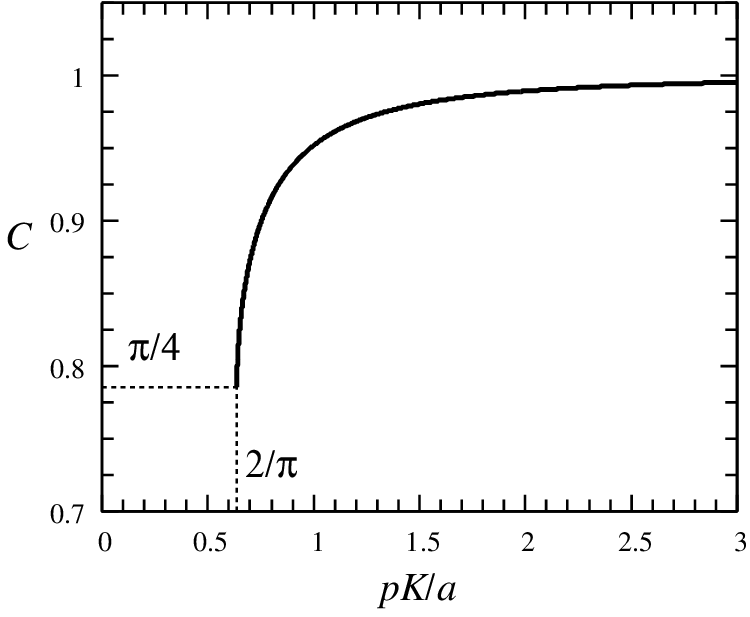}
\caption{Dependence of $C$ on $pK/a$ in \eqref{eqn:Cex}.
\label{fig:2a}}
\end{figure}

For the frequency function \eqref{eqn:lomega},
 the family \eqref{eqn:csol} of continuous stationary solutions becomes
\begin{equation}
u(t,x)=U(x)+\theta,\quad
U(x)=\arcsin\left(\frac{a(x-\tfrac{1}{2})}{pKC}\right),
\label{eqn:csol0}
\end{equation}
where $\theta\in\Sset^1$ is an arbitrary constant
 and the constant $C$ satisfies \eqref{eqn:CU} with \eqref{eqn:lomega}:
\begin{equation}
C=\frac{pKC}{a}\left(\arcsin\left(\frac{a}{2pKC}\right)
 +\frac{a}{2pKC}\sqrt{1-\left(\frac{a}{2pKC}\right)^2}\right),
\label{eqn:Cex}
\end{equation}
Note that $\Omega=0$ for \eqref{eqn:lomega}.
The dependence of $C$ on $pK/a$
 which is calculated from \eqref{eqn:Cex} is displayed in Fig.~\ref{fig:2a}.
We easily see that the CL \eqref{eqn:csys}
 has another family of continuous stationary solutions
\begin{equation}
u(t,x)=\pi-U(x)+\theta,
\label{eqn:csol1}
\end{equation}
where $U(x)$ is the same as in \eqref{eqn:csol0}.
Obviously, the two families \eqref{eqn:csol0} and \eqref{eqn:csol1} never coincide.
We have the following result (see Theorem~7.1 of \cite{Y24a}).

\begin{thm}
\label{thm:2f}
The two families \eqref{eqn:csol0} and \eqref{eqn:csol1} of continuous stationary solutions
 exist in the CL \eqref{eqn:csys} with the frequency function \eqref{eqn:lomega}
 if and only if $pK/a\ge 2/\pi$.
\end{thm}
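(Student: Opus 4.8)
The plan is to reduce the existence of the two families \eqref{eqn:csol0} and \eqref{eqn:csol1} to the solvability of the scalar consistency equation \eqref{eqn:Cex} for a positive constant $C$, subject to the requirement that the argument of $\arcsin$ in \eqref{eqn:csol0} lie in $[-1,1]$ for every $x\in I$. Since $|a(x-\tfrac12)|$ attains its maximum $a/2$ at $x=0,1$, this well-definedness requirement is exactly $C\ge a/(2pK)$. Because $U(x)$ is the same in both \eqref{eqn:csol0} and \eqref{eqn:csol1} and $\Omega=0$, the two families exist together precisely when such a $C$ exists, so it suffices to analyze \eqref{eqn:Cex}.

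First I would introduce the substitution $\beta=a/(2pKC)$, which turns the well-definedness constraint into $\beta\in(0,1]$. Dividing \eqref{eqn:Cex} by $C>0$ recasts the consistency equation as
\[
f(\beta):=\arcsin\beta+\beta\sqrt{1-\beta^2}=\frac{a}{pK}.
\]
Thus the problem becomes: for which parameter values does $f(\beta)=a/(pK)$ admit a solution $\beta\in(0,1]$?

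The key step is a monotonicity analysis of $f$ on $[0,1]$. A direct differentiation gives the clean identity
\[
f'(\beta)=\frac{1}{\sqrt{1-\beta^2}}+\frac{1-2\beta^2}{\sqrt{1-\beta^2}}=2\sqrt{1-\beta^2}\ge 0,
\]
with strict positivity on $(0,1)$, so $f$ is strictly increasing. Together with $f(0)=0$ and $f(1)=\pi/2$, this shows that $f$ is a continuous strictly increasing bijection of $[0,1]$ onto $[0,\pi/2]$. Hence a (necessarily unique) $\beta\in(0,1]$ with $f(\beta)=a/(pK)$ exists if and only if $0<a/(pK)\le\pi/2$; since $a,p,K>0$ the left inequality is automatic, and the condition reduces to $pK/a\ge 2/\pi$, as claimed.

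The only point requiring care is the boundary case $\beta=1$, i.e.\ $pK/a=2/\pi$: here $C=a/(2pK)$ and the argument of $\arcsin$ reaches $\pm 1$ at $x=0,1$, yet $U$ remains continuous with $U(0)=-\pi/2$ and $U(1)=\pi/2$, so the family still exists; this is why the inequality in the statement is non-strict. I expect the derivative computation yielding $f'(\beta)=2\sqrt{1-\beta^2}$ to be the main technical step, after which the bijectivity of $f$ and the translation back to $pK/a$ are routine.
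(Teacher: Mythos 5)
Your proof is correct, and it is necessarily of a different character from the paper's, because the paper does not actually prove Theorem~\ref{thm:2f}: it simply cites Theorem~7.1 of \cite{Y24a}. Your argument supplies the missing details, and in what is surely the intended way: reduce existence to solvability of the consistency equation \eqref{eqn:Cex}, substitute $\beta=a/(2pKC)$ so that well-definedness of the $\arcsin$ in \eqref{eqn:csol0} becomes exactly $\beta\in(0,1]$, verify $f'(\beta)=2\sqrt{1-\beta^2}$ for $f(\beta)=\arcsin\beta+\beta\sqrt{1-\beta^2}$, and conclude that $f$ is a continuous strictly increasing bijection of $[0,1]$ onto $[0,\tfrac{1}{2}\pi]$, so that an admissible $\beta$ exists if and only if $a/(pK)\le\tfrac{1}{2}\pi$, i.e.\ $pK/a\ge 2/\pi$. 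Your treatment of the boundary case $\beta=1$ correctly accounts for the non-strict inequality, and as a byproduct you obtain uniqueness of $C$, consistent with the single-valued curve in Fig.~\ref{fig:2a}.

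One caveat on your opening reduction (``the two families exist together precisely when such a $C$ exists''): this takes at face value the paper's assertions that \eqref{eqn:csol0} is stationary whenever $C>0$ solves \eqref{eqn:CU} (imported from \cite{IY23}) and that \eqref{eqn:csol1}, with the \emph{same} $U$ and the same $C$, is stationary as well. The first assertion is immediate from $\int_I\sin U(y)\,\d y=0$ and $\int_I\cos U(y)\,\d y=C$. The second is not symmetric to it: for $u=\pi-U(x)+\theta$ one has $u(y)-u(x)=U(x)-U(y)$, and the same two identities give $\omega(x)+pK\int_I\sin(U(x)-U(y))\,\d y=\omega(x)+pK\cdot C\sin U(x)/1=2\omega(x)\neq 0$, so stationarity of \eqref{eqn:csol1} as literally written does not follow from the same computation; it would require a different convention (e.g.\ allowing a negative constant in \eqref{eqn:CU}, as in the consistency relation stated below \eqref{eqn:csol2} with $\hat{I}=[0,1]$). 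This is a point you inherit from the paper rather than introduce --- the paper dismisses it with ``we easily see'' and then delegates the theorem to \cite{Y24a} --- and it does not affect your analysis of \eqref{eqn:Cex}, which is the genuine quantitative content of the theorem; but a fully self-contained proof of the statement as worded would have to address it.
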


Theorem~\ref{thm:2f} does not deny
 the existence of other discontinuous stationary solutions to the CL \eqref{eqn:csys}.
Indeed, Eq.~\eqref{eqn:csys} has infinitely many one-parameter families
 of discontinuous stationary solutions of the form
\begin{equation}
u(t,x)=\begin{cases}
U(x)+\theta
 & \mbox{for $x\in[0,1]\setminus\hat{I}$;}\\
\pi-U(x)+\theta & \mbox{for $x\in\hat{I}_j^+$, $j\in[m_+]$;}\\
-U(x)-\pi+\theta & \mbox{for $x\in\hat{I}_j^-$, $j\in[m_-]$,}
\end{cases}
\label{eqn:csol2}
\end{equation}
where $\theta\in\Sset^1$ is an arbitrary constant,
 $m_\pm$ are nonnegative integers and may be infinite,
 $\hat{I}_j^{\pm}\subset[0,1]$, $j\in[m_\pm]$, are intervals
 such that $\hat{I}_j^-\subset[0,\tfrac{1}{2}]$, $\hat{I}_j^+\subset[\tfrac{1}{2},1]$,
 $\hat{I}_j^-\cap \hat{I}_k^-,\hat{I}_j^+\cap \hat{I}_k^+=\emptyset$ and
\[
\hat{I}=\bigcup_{j=1}^{m_-}\hat{I}_j^-\cup\bigcup_{j=1}^{m_+}\hat{I}_j^+,
\]
and the constant $C$ in $U(x)$ satisfies
\[
C=\int_{[0,1]\setminus\hat{I}}\sqrt{1-\left(\frac{a(x-\tfrac{1}{2})}{pKC}\right)^2}\d x
 -\int_{\hat{I}}\sqrt{1-\left(\frac{a(x-\tfrac{1}{2})}{pKC}\right)^2}\d x.
\]
Here the interiors of $\hat{I}_j^\pm$, $j\in[m_\pm]$, may be empty.
See Section~7 of \cite{Y24a} for the details,
 although the formula for $C$ there can reduce to  the above with $p=1$
We have the following result from Theorem~7.2 of \cite{Y24a}.

\begin{thm}\
\label{thm:2g}
\begin{enumerate}
\setlength{\leftskip}{-1.8em}
\item[(i)]
The family of continuous stationary solutions given by \eqref{eqn:csol0}
 is asymptotically stable,
 while the family of continuous stationary solutions given by \eqref{eqn:csol1}
 is unstable.
\item[(ii)]
The family of discontinuous stationary solutions given by \eqref{eqn:csol2} is unstable
 if it is different from \eqref{eqn:csol0} in the sense of $L^2(I)$.
\end{enumerate}
\end{thm}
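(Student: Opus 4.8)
The plan is to exploit the fact that the continuum limit \eqref{eqn:csys} is a gradient flow. Near a stationary solution $\bar{\mathbf u}$ we may lift $u$ to a real-valued function, and a direct computation shows $\partial_t u=-\delta E/\delta u$ with
\[
E[u]=-\int_I\omega(x)\,u(x)\,\d x-\frac{pK}{2}\int_I\int_I\cos\!\big(u(y)-u(x)\big)\,\d x\,\d y .
\]
Hence stationary solutions are exactly the critical points of $E$, and a family $\U$ is asymptotically stable (resp.\ unstable) precisely when $\bar{\mathbf u}$ is a strict local minimum of $E$ modulo the rotational symmetry $u\mapsto u+\boldsymbol\theta$ (resp.\ not a local minimum). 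First I would reduce both parts of the theorem to the sign of the second variation
\[
\delta^2E[\bar{\mathbf u}](v,v)=\frac{pK}{2}\int_I\int_I\cos\!\big(\bar u(y)-\bar u(x)\big)\,\big(v(x)-v(y)\big)^2\,\d x\,\d y ,
\]
equivalently to the spectrum of $L:=-\,\delta^2E[\bar{\mathbf u}]$, i.e.\ of the linearization $(Lv)(x)=pK\int_I\cos(\bar u(y)-\bar u(x))\,(v(y)-v(x))\,\d y$, which governs $L^2(I)$ stability through the gradient structure.

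Next I would analyze $\sigma(L)$. Because $\cos(\bar u(y)-\bar u(x))=\cos\bar u(x)\cos\bar u(y)+\sin\bar u(x)\sin\bar u(y)$ has a rank-two kernel and $\int_I\sin\bar u\,\d y=0$ for all three families (the latter from the oddness of $\omega$ about $x=\tfrac12$ in \eqref{eqn:lomega}), $L$ splits as multiplication by
\[
d(x):=-pK\,\cos\bar u(x)\int_I\cos\bar u(y)\,\d y
\]
plus a compact (finite-rank) integral operator. Thus $\sigma_{\mathrm{ess}}(L)$ is the essential range of $d$, and any $\lambda\notin\sigma_{\mathrm{ess}}(L)$ is an eigenvalue iff a $2\times2$ determinant built from the integrals $\int\cos^2\bar u/(\lambda-d)$, $\int\sin^2\bar u/(\lambda-d)$ and $\int\cos\bar u\sin\bar u/(\lambda-d)$ vanishes. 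For the symmetric families \eqref{eqn:csol0} and \eqref{eqn:csol1} the cross integral is odd about $x=\tfrac12$ and drops out, so the eigenvalues are exactly the roots of $E_{\cos}(\lambda)=1$ and $E_{\sin}(\lambda)=1$, where $E_{\cos}(\lambda)=pK\int_I\cos^2\bar u/(\lambda-d)\,\d x$ and $E_{\sin}$ is defined with $\sin^2\bar u$.

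For the stable assertion in (i) I would take $\bar u=U$, so that $\int_I\cos U\,\d x=C>0$ and $d(x)=-pKC\cos U(x)$ is bounded above by a strictly negative constant; hence $\sigma_{\mathrm{ess}}(L)\subset(-\infty,0)$. For the point spectrum I would prove $|E_{\cos}(\lambda)|,|E_{\sin}(\lambda)|\le1$ on $\{\operatorname{Re}\lambda\ge0\}$ using $|\lambda-d(x)|\ge|d(x)|=pKC\cos U(x)$ there: this gives $|E_{\cos}(\lambda)|\le\tfrac1C\int_I\cos U\,\d x=1$, with equality forcing $\lambda=0$ (the simple eigenvalue with eigenfunction the constant $\mathbf 1$, i.e.\ the symmetry direction), and $|E_{\sin}(\lambda)|\le E_{\sin}(0)=\big(\arcsin\mu-\mu\sqrt{1-\mu^2}\big)/\big(\arcsin\mu+\mu\sqrt{1-\mu^2}\big)<1$ for $\mu:=a/(2pKC)\in(0,1)$, i.e.\ for $pK/a>2/\pi$. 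Therefore the only right-half-plane spectrum is the symmetry-induced simple $0$, so $U$ is a strict local minimum of $E$ modulo $\Sset^1$ and $\U$ is asymptotically stable.

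For the instability statements I would show $\bar{\mathbf u}$ is not a local minimum of $E$. For \eqref{eqn:csol2} with $\hat I$ of positive measure (equivalently, different from \eqref{eqn:csol0} in $L^2(I)$), on $\hat I$ one has $\cos\bar u=-\cos U$ while $\int_I\cos\bar u\,\d y=C>0$, so $d(x)=+pKC\cos U(x)>0$ there and $\sigma_{\mathrm{ess}}(L)$ meets $(0,\infty)$; a Weyl sequence $v_n$ that is highly oscillatory, supported in $\hat I$, with $\|v_n\|=1$ and $v_n\rightharpoonup0$, then satisfies $\delta^2E[\bar{\mathbf u}](v_n,v_n)=-\langle Lv_n,v_n\rangle\to-\int_{\hat I}d\,v_n^2<0$, since the compact part vanishes in the weak limit; this descent direction proves (ii). For the continuous family \eqref{eqn:csol1} the essential spectrum is negative, so the instability must instead come from a right-half-plane root of one of the characteristic equations above, which I would locate by tracking the sign of $E_{\cos}-1$ or $E_{\sin}-1$ along the real axis with the oscillators on the branch where $\cos\bar u<0$. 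The hard part will be twofold. First, turning the spectral and variational information into genuine nonlinear (asymptotic) stability and instability in $L^2(I)$ for this nonlocal operator, which has a symmetry-induced neutral mode and continuous essential spectrum: here the gradient structure, together with analyticity of $E$ and a \L{}ojasiewicz-type inequality (or a direct spectral-gap/center-manifold argument), is what forces convergence to the orbit. Second, establishing positivity of $\delta^2E$ modulo $\Sset^1$ for \eqref{eqn:csol0} across the entire range $pK/a>2/\pi$, where $\cos(\bar u(y)-\bar u(x))$ is no longer pointwise nonnegative once $2\arcsin\mu>\tfrac\pi2$, so that positivity is a genuinely nonlocal estimate rather than a pointwise one, carried precisely by the sharp bounds on $E_{\cos}$ and $E_{\sin}$.
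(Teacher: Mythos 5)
The paper does not actually prove Theorem~\ref{thm:2g} here: it imports it from Theorem~7.2 of \cite{Y24a}, where the argument runs through the finite-dimensional KM --- the complete classification of the $2^{(n-1)/2}$ equilibrium families and their Jacobian stability --- combined with the KM-to-CL transfer results reproduced in Section~2.1 (the paper says explicitly that Theorem~\ref{thm:2d}(i) is the key ingredient for part~(ii)). Your route --- gradient structure, second variation, splitting the linearization into multiplication by $d(x)=-pK\cos\bar u(x)\int_I\cos\bar u\,\d y$ plus a rank-two kernel, then essential spectrum and the characteristic equations $E_{\cos}(\lambda)=1$, $E_{\sin}(\lambda)=1$ --- is genuinely different, and for \eqref{eqn:csol0} and \eqref{eqn:csol2} your spectral computations are correct: $E_{\cos}(0)=1$ with eigenfunction $\mathbf{1}$, $E_{\sin}(0)<1$, $d<0$ for \eqref{eqn:csol0}, and $d>0$ on $\hat I$ for \eqref{eqn:csol2}.

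There are, however, two genuine gaps. First, your plan for \eqref{eqn:csol1} cannot work. Since cosine is even, at $\bar u=\pi-U+\theta$ one has $\cos(\bar u(y)-\bar u(x))=\cos(U(y)-U(x))$, so $d$, $E_{\cos}$ and $E_{\sin}$ are \emph{identical} to those of the stable family \eqref{eqn:csol0}: there is no right-half-plane root to locate, and your search would return spectral stability, not the claimed instability. The trouble is more basic: under the stated sign conventions, $\pi-U(x)+\theta$ is not a stationary solution of \eqref{eqn:csys} with \eqref{eqn:lomega} at all, since $\int_I\sin U\,\d y=0$ and $\int_I\cos U\,\d y=C$ give
\[
\omega(x)+pK\int_I\sin\bigl(U(x)-U(y)\bigr)\,\d y=\omega(x)+pKC\sin U(x)=2\omega(x)\not\equiv 0;
\]
the genuine stationary ``partner'' is $\pi+U(x)+\theta$, which lies on the same $\Sset^1$-family as \eqref{eqn:csol0}. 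So this third of the theorem is inaccessible to a linearization-based framework, and any honest write-up of your proof will have to confront this discrepancy with the statement as imported from \cite{Y24a} (the unstable second branch of the finite-$n$ consistency equation does not survive as a distinct continuous stationary family in this CL, because \eqref{eqn:Cex} has a unique root).

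Second, the step you defer --- from spectral information to nonlinear orbital (in)stability in $L^2(I)$ --- is not merely technical. The vector field of \eqref{eqn:csys} is Lipschitz but not Fr\'echet differentiable on $L^2(I)$ (Nemytskii operators $u\mapsto\sin u,\cos u$ are not $C^1$ there), so center-manifold or Lojasiewicz--Simon machinery does not apply off the shelf; likewise ``not a local minimum of $E$ implies unstable'' is delicate because the critical points here are not isolated. Worse, your own part-(ii) construction creates an obstruction to the stability claim itself: for every small $\eta>0$, \eqref{eqn:csol2} with $|\hat I|=\eta$ has a consistency constant $C_\eta\to C$ (a regular perturbation of \eqref{eqn:Cex}), hence lies within $O(\sqrt{\eta}\,)$ of $\U$ in $L^2(I)$ while remaining at positive distance from it. So every $L^2$-neighborhood of $\U$ contains equilibria that never approach $\U$, and the assertion ``all solutions starting $L^2$-close to $\U$ converge to $\U$'' cannot be established by any local argument --- it is false as literally stated. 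This is precisely the subtlety the paper's discretize-and-transfer formulation is built around: at finite $n$ the flipped KM equilibria sit at distance of order $n^{-1/2}$ from $\U_n$, so each $\U_n$ can be asymptotically stable with an $n$-dependent basin. Your approach, carried out carefully, exposes this tension in the theorem rather than proving it.
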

 
Theorem~\ref{thm:2d}(i) plays an important role
 in the proof of Theorem~\ref{thm:2g}(ii) (see Section~7 of \cite{Y24a}).


\section{Main Results}

We turn to the KM \eqref{eqn:dsys} with random natural frequencies $\omega_i^n$, $i\in[n]$.
We assume that 
 they are independent and identically distributed on a bounded closed interval,
 and without loss of generality that
 the interval is written as $[-\tfrac{1}{2}a,\tfrac{1}{2}a]$ for some $a>0$,
 taking a rotational frame rotating with the mean of the natural frequencies
 if necessary.

Let $\rho(\omega)$ be the probability density function of $\omega_i^n$, $i\in[n]$,
 and assume that $\rho(\omega)>0$ on $[-\tfrac{1}{2}a,\tfrac{1}{2}a]$.
Let $\xi_n:[n]\to[n]$ denote the random permutation of $[n]$,
 such that
\[
\omega_{\xi_n(1)}^n<\ldots<\omega_{\xi_n(n)}^n\quad\mbox{a.s.}
\]
We easily see that $\xi_n:[n]\to[n]$ is also uniform
 since $\omega_i^n$, $i\in[n]$, are independent and identically distributed.
Let $F(x)$ denote their probability distribution function, i.e.,
\[
F(\omega)=\int_{-a/2}^{\omega}\rho(z)\d z,\quad
\omega\in[-\tfrac{1}{2}a,\tfrac{1}{2}a],
\]
and let
\[
\nu_n(i)=n\int_{I_i^n}F^{-1}(x)\d x,\quad
i\in[n].
\]
Note that the inverse function $F^{-1}(x)$ always exists
 since $F(x)$ is strictly increasing.
It is clear that both $F(\omega)$ and $F^{-1}(x)$ are continuous.

\begin{prop}
\label{prop:3a}
The random variable $\max_{i\in[n]}|\omega_{\xi_n(i)}^n-\nu_n(i)|$
 converges to zero a.s. as $n\to\infty$.
\end{prop}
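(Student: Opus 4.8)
The plan is to reduce the statement to a classical uniform convergence result for the order statistics of a uniform sample, via the probability integral transform, and then to exploit the uniform continuity of the quantile function $F^{-1}$.

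First I would set $U_i:=F(\omega_i^n)$, $i\in[n]$. Since $F$ is continuous and strictly increasing, the $U_i$ are independent and identically distributed uniformly on $I$, and they are a.s. distinct; moreover, because $F^{-1}$ is monotone increasing, sorting the $\omega_i^n$ is equivalent to sorting the $U_i$, so that $\omega_{\xi_n(i)}^n=F^{-1}(U_{(i)})$, where $U_{(1)}<\cdots<U_{(n)}$ denote the uniform order statistics. This recasts the quantity of interest as $|F^{-1}(U_{(i)})-\nu_n(i)|$.

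Next, since $F^{-1}$ is continuous on the compact interval $I$, the mean value theorem for integrals supplies a point $x_i^\ast\in I_i^n$ with $\nu_n(i)=n\int_{I_i^n}F^{-1}(x)\,\d x=F^{-1}(x_i^\ast)$, whence $|\omega_{\xi_n(i)}^n-\nu_n(i)|=|F^{-1}(U_{(i)})-F^{-1}(x_i^\ast)|$. Because $x_i^\ast\in[(i-1)/n,i/n]$, one has the deterministic bound $|x_i^\ast-i/n|\le 1/n$ uniformly in $i$.

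The core of the argument is the uniform-in-$i$ control of the $U_{(i)}$. Writing $\hat{F}_n$ for the empirical distribution function of $U_1,\ldots,U_n$, the Glivenko--Cantelli theorem gives $D_n:=\sup_{x\in I}|\hat{F}_n(x)-x|\to 0$ a.s. Evaluating at $x=U_{(i)}$ and using $\hat{F}_n(U_{(i)})=i/n$ (valid a.s. since the $U_i$ are distinct) yields $\max_{i\in[n]}|U_{(i)}-i/n|\le D_n$, and hence $\max_{i\in[n]}|U_{(i)}-x_i^\ast|\le D_n+1/n\to 0$ a.s. Finally, uniform continuity of $F^{-1}$ on $I$ converts this into the claim: given $\epsilon>0$, choose $\delta>0$ with $|x-y|<\delta\Rightarrow|F^{-1}(x)-F^{-1}(y)|<\epsilon$, and take $n$ so large that $D_n+1/n<\delta$ a.s.; then $\max_{i\in[n]}|\omega_{\xi_n(i)}^n-\nu_n(i)|<\epsilon$ a.s. The only genuinely probabilistic ingredient is the a.s. uniform convergence of the uniform order statistics to their positions $i/n$, handled cleanly by Glivenko--Cantelli (a quantitative DKW-type bound with Borel--Cantelli would also do, but is not needed); everything else is routine once the probability integral transform reduces matters to the uniform case.
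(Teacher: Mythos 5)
Your proof is correct and follows essentially the same route as the paper's: both arguments hinge on the Glivenko--Cantelli theorem (applied to the empirical distribution of the sample) to pin the order statistics to their expected positions, and then on uniform continuity of $F^{-1}$ on the compact interval $I$ to transfer that control to $\max_{i\in[n]}|\omega_{\xi_n(i)}^n-\nu_n(i)|$. Your packaging via the probability integral transform and the mean value theorem for integrals is a cosmetic variant of the paper's direct computation; if anything, the mean value theorem step ($\nu_n(i)=F^{-1}(x_i^\ast)$ with $x_i^\ast\in I_i^n$) is a little cleaner than the paper's expansion $\int_{I_i^n}F^{-1}(x)\,\d x=\tfrac{1}{n}F^{-1}\left(\tfrac{i-1}{n}\right)+O(n^{-2})$, which as written requires more than mere continuity of $F^{-1}$.
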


\begin{proof}
Let 
\[
F_n(\omega)=\frac{1}{n}\sum_{i=1}^n \mathbf{1}_{\{\omega_i^n<\omega\}},
\]
where $\mathbf{1}_{\{\omega_i^n<\omega\}}$ represents the characteristic function of the set
 $\{\omega_i^n<\omega\}$ for $i\in[n]$.
By the strong law of large numbers for empirical distributions
 (see, e.g., Theorem~1.1.3 in Part~III of \cite{S05}), we have
\begin{equation}
\lim_{n\to\infty}\sup_{\omega\in[-a/2,a/2]}|F_n(\omega)-F(\omega)|=0\quad\mbox{a.s.}
\label{eqn:prop3a}
\end{equation}
By the continuity of $F(\omega)$, we compute
\[
\int_{I_i^n}F^{-1}(x)\d x=\frac{1}{n}F^{-1}\left(\frac{i-1}{n}\right)+O(n^{-2}),
\]
so that
\[
F(\nu_n(i))=F\left(n\int_{I_i^n}F^{-1}(x)\d x\right)=\frac{i-1}{n}+O(n^{-1}).
\]
Since
\[
F_n(\omega_{\xi_n(i)}^n)=\frac{i-1}{n}\quad\mbox{a.s.},
\]
we obtain
\[
F(\nu_n(i))=F_n(\omega_{\xi_n(i)}^n)+O(n^{-1})\quad\mbox{a.s.}
\]
Hence, we have
\begin{align*}
&
\max_{i\in[n]}|F(\nu_n(i))-F(\omega_{\xi_n(i)}^n)|\\
&
\le\max_{i\in[n]}|F(\nu_n(i))-F_n(\omega_{\xi_n(i)}^n)|
 +\max_{i\in[n]}|F_n(\omega_{\xi_n(i)}^n)-F(\omega_{\xi_n(i)}^n)|\\
&
\le\max_{i\in[n]}|F(\nu_n(i))-F_n(\omega_{\xi_n(i)}^n)|
 +\sup_{\omega\in[-a/2,a/2]}|F_n(\omega)-F(\omega)|,
\end{align*}
which yields
\[
\lim_{n\to\infty}\max_{i\in[n]}|F(\nu_n(i))-F(\omega_{\xi_n(i)}^n)|=0\quad\mbox{a.s.}
\]
by \eqref{eqn:prop3a} when taking the limit $n\to\infty$.
Since $F^{-1}(x)$ is continuous, we obtain the desired result.
\end{proof}

Define the random operator $T_{\xi_n}:\Tset^n=\prod_{i=1}^n\Sset^1\to\Tset^n$
 for the uniform random permutation $\xi_n:[n]\to[n]$ as
\[
T_{\xi_n}(u_1^n(t),\ldots,u_n^n(t))
 =(u_{\xi_n(1)}^n(t),\ldots,u_{\xi_n}^n(t)),
\]
and write
\[
T_{\xi_n}\mathbf{u}_n(t)=\sum_{i=1}^nu_{\xi_n(i)}^n(t)\mathbf{1}_i^n.
\]
Let  $\bar{\mathbf{u}}(t)$ be a solutions to the CL \eqref{eqn:csys},
 and let $\U=\{\bar{\mathbf{u}}(t)+\boldsymbol{\theta}\mid\theta\in\Sset^1\}$,
 as in Section~2.
Using Theorems~\ref{thm:2b} and \ref{thm:2e}, Corollary~\ref{cor:2a}
 and Proposition~\ref{prop:3a},
 we prove the following theorem.

\begin{thm}
\label{thm:3a}
Suppose that $\omega_i^n$, $i\in[n]$, are independent and identically distributed
 on the finite interval $[-\tfrac{1}{2}a.\tfrac{1}{2}a]$ with the probability distribution function $F(\omega)$.
Let $\omega(x)=F^{-1}(x)$.
Then the following hold$:$
\begin{enumerate}
\setlength{\leftskip}{-1.6em}
\item[\rm(i)]
If $\mathbf{u}_n(t)$ is the solution to the IVP of \eqref{eqn:dsys}
 with the initial condition
\[
\lim_{n\to\infty}\|T_{\xi_n}\mathbf{u}_n(0)-\mathbf{u}(0)\|=0\quad\mbox{a.s.},
\]
then for any $\tau > 0$ we have
\[
\lim_{n \rightarrow \infty}
\max_{t\in[0,\tau]}\|T_{\xi_n}\mathbf{u}_n(t)-\mathbf{u}(t)\|=0\quad\mbox{a.s.},
\]
where $\mathbf{u}(t)$ represents the solution
 to the IVP of the CL \eqref{eqn:csys}.
\item[\rm(ii)]
If $\U$ is stable, then for any $\epsilon,\tau>0$ there exists $\delta>0$
 such that for any $n>0$ sufficiently large,
 if $\mathbf{u}_n(t)$ is any solution to the KM \eqref{eqn:dsys} satisfying
\[
\min_{\theta\in\Sset^1}
 \|T_{\xi_n}\mathbf{u}_n(0)-\bar{\mathbf{u}}(0)-\boldsymbol\theta\|<\delta\quad\mbox{a.s.},
 \]
then
\[
\max_{t\in[0,\tau]}\min_{\theta\in\Sset^1}
 \|T_{\xi_n}\mathbf{u}_n(t)-\bar{\mathbf{u}}(t)-\boldsymbol\theta\|<\epsilon\quad\mbox{a.s.}
\]
Moreover, if $\U$ is asymptotically stable, then
\[
\lim_{t\to\infty}\lim_{n\to\infty}\min_{\theta\in\Sset^1}
 \|T_{\xi_n}\mathbf{u}_n(t)-\bar{\mathbf{u}}(t)-\boldsymbol\theta\|=0\quad\mbox{a.s.}
\]
\item[\rm(iii)]
If $\U$ is unstable,
 then for any $\epsilon,\delta>0$ there exists $\tau>0$ such that for $n>0$ sufficiently large
\[
\min_{\theta\in\Sset^1}\|T_{\xi_n}\mathbf{u}_n(\tau)-\bar{\mathbf{u}}(\tau)-\boldsymbol{\theta}\|
 >\epsilon\quad\mbox{a.s.},
\]
where $\mathbf{u}_n(t)$ is a solution to the KM \eqref{eqn:dsys} satisfying
\[
\min_{\theta\in\Sset^1}\|T_{\xi_n}\mathbf{u}_n(0)-\bar{\mathbf{u}}(0)-\boldsymbol\theta\|<\delta\quad\mbox{a.s.}
\]
\end{enumerate}
\end{thm}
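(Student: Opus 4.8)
The plan is to conjugate the KM \eqref{eqn:dsys} by the sorting permutation $\xi_n$ so as to replace the random frequencies by \emph{deterministic} ones, and then to quote Theorems~\ref{thm:2b} and \ref{thm:2e} and Corollary~\ref{cor:2a}, using Proposition~\ref{prop:3a} to absorb the frequency error. First I would set $\mathbf{v}_n(t):=T_{\xi_n}\mathbf{u}_n(t)$, i.e.\ $v_i^n(t)=u_{\xi_n(i)}^n(t)$. Substituting $j=\xi_n(k)$ in the coupling sum shows that $\mathbf{v}_n$ again solves a KM,
\[
\frac{\d}{\d t}v_i^n=\omega_{\xi_n(i)}^n+\frac{K}{n\alpha_n}\sum_{k=1}^n w_{\xi_n(i),\xi_n(k)}^n\sin(v_k^n-v_i^n),\quad i\in[n],
\]
now carrying the permuted weights $\tilde w_{ik}^n:=w_{\xi_n(i),\xi_n(k)}^n$ and the sorted frequencies $\omega_{\xi_n(i)}^n$.

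The crucial observation, and the reason uniformity of the graph is needed, is that the permuted weights may be treated as a fresh sample of the same random graph. Because $W\equiv p$, every connection probability in \eqref{eq:ddr} and \eqref{eq:sdr} is the same constant, so conditionally on $\xi_n$ the array $(\tilde w_{ik}^n)$ has exactly the same law as $(w_{ik}^n)$; here I would use that the weights are independent of the natural frequencies and hence of $\xi_n$. I would therefore introduce the auxiliary solution $\hat{\mathbf{v}}_n(t)$ of \eqref{eqn:dsys} with the \emph{same} weights $\tilde w_{ik}^n$ but the deterministic frequencies $\nu_n(i)=n\int_{I_i^n}\omega(x)\,\d x$ (that is, \eqref{eqn:omega} with $\omega=F^{-1}$) and $\hat{\mathbf{v}}_n(0)=\mathbf{v}_n(0)$. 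Then $\hat{\mathbf{v}}_n$ is a genuine deterministic-frequency KM of the type treated in Section~2, and each a.s.\ conclusion of Theorems~\ref{thm:2b} and \ref{thm:2e} and of Corollary~\ref{cor:2a} holds for it a.s.\ conditionally on $\xi_n$, hence a.s.\ after averaging over $\xi_n$. I expect this transfer of the a.s.\ statements through the conditioning on $\xi_n$ to be the main obstacle; everything else is routine.

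Next I would control $\mathbf{v}_n-\hat{\mathbf{v}}_n$. The two systems share their weights and differ only in the forcing frequencies, so subtracting the equations and using $|\sin a-\sin b|\le|a-b|$ together with the uniform bound on the scaled row sums $\frac{1}{n\alpha_n}\sum_k\tilde w_{ik}^n$ coming from \eqref{eqn:assumpx}--\eqref{eqn:assumpy} (as in the proof of Theorem~2.3 of \cite{IY23}), a Gr\"onwall estimate in $L^2(I)$ should give, for any $\tau>0$,
\[
\max_{t\in[0,\tau]}\|\mathbf{v}_n(t)-\hat{\mathbf{v}}_n(t)\|\le \tau e^{L\tau}\max_{i\in[n]}\bigl|\omega_{\xi_n(i)}^n-\nu_n(i)\bigr|,
\]
with $L$ independent of $n$, and the right-hand side tends to $0$ a.s.\ as $n\to\infty$ by Proposition~\ref{prop:3a}.

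Finally I would assemble the three parts. For (i), $\hat{\mathbf{v}}_n(0)=T_{\xi_n}\mathbf{u}_n(0)\to\mathbf{u}(0)$ a.s., so Theorem~\ref{thm:2b} gives $\max_{t\in[0,\tau]}\|\hat{\mathbf{v}}_n(t)-\mathbf{u}(t)\|\to0$ a.s., and a triangle inequality plus the Gr\"onwall bound finishes it. For (ii), the hypothesis puts $\hat{\mathbf{v}}_n(0)$ within $\delta$ of $\U$, so Corollary~\ref{cor:2a} (with $\epsilon/2$) controls $\hat{\mathbf{v}}_n$, and $\min_{\theta}\|A-\boldsymbol\theta\|\le\|A-B\|+\min_{\theta}\|B-\boldsymbol\theta\|$ absorbs the Gr\"onwall term; the asymptotic-stability claim follows since the inner limit $n\to\infty$ annihilates $\|\mathbf{v}_n(t)-\hat{\mathbf{v}}_n(t)\|$ at each fixed $t$ before $t\to\infty$ is taken. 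For (iii), Theorem~\ref{thm:2e} supplies a $\tau$ with $\min_{\theta}\|\hat{\mathbf{v}}_n(\tau)-\bar{\mathbf{u}}(\tau)-\boldsymbol\theta\|>2\epsilon$ for large $n$, and the reverse triangle inequality $\min_{\theta}\|A-\boldsymbol\theta\|\ge\min_{\theta}\|B-\boldsymbol\theta\|-\|A-B\|$ together with $\|\mathbf{v}_n(\tau)-\hat{\mathbf{v}}_n(\tau)\|<\epsilon$ yields the stated lower bound.
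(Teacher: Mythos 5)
Your proposal is correct and follows essentially the same route as the paper's own proof: conjugate by $\xi_n$ to get the permuted system \eqref{eqn:dsys2}, introduce the auxiliary KM \eqref{eqn:dsys3} with the same permuted weights but deterministic frequencies $\nu_n(i)$, apply Theorems~\ref{thm:2b} and \ref{thm:2e} and Corollary~\ref{cor:2a} to that system, and absorb the frequency error via Proposition~\ref{prop:3a}. The only differences are presentational: you make the perturbation step explicit through a Gr\"onwall estimate where the paper invokes continuity of solutions with respect to parameters (Theorem~7.4 in Chapter~1 of \cite{CL55}), and you spell out the law-invariance of the permuted weights under the uniform-graph assumption, which the paper leaves implicit.
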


\begin{proof}
We additionally consider the two KMs
\begin{align}
&
\frac{\d}{\d t}u_{\xi_n(i)}^n (t)\notag\\
&
=\omega_{\xi_n(i)}^n
 +\frac{K}{n \alpha_n} \sum^{n}_{j=1}
 w_{\xi_n(i)\xi_n(j)}^n \sin \left(u_{\xi_n(j)}^n(t)-u_{\xi_n(i)}^n(t)\right),\quad
i\in [n],
\label{eqn:dsys2}
\end{align}
and
\begin{align}
\frac{\d}{\d t}v_i^n(t)=\nu_n(i)
 +\frac{K}{n \alpha_n} \sum^{n}_{j=1}
 w_{\xi_n(i)\xi_n(j)}^n \sin \left(v_j^n(t)-v_i^n(t)\right),\quad
i\in [n].
\label{eqn:dsys3}
\end{align}
By Proposition~\ref{prop:3a},
 the frequency $\omega_{\xi_n(i)}^n$ in \eqref{eqn:dsys2}
 converges to $\nu_n(i)$ in \eqref{eqn:dsys3} a.s. as $n\to\infty$ for each $i\in[n]$.

The CL corresponding to the KM \eqref{eqn:dsys3} is also given
 by \eqref{eqn:csys} with $u=v$
 since $\nu_n(i)$, $i\in[n]$, satisfy \eqref{eqn:omega}
 with $\omega(x)=F^{-1}(x)$.
Hence, the statements of Theorems~\ref{thm:2b} and \ref{thm:2e}
 and Corollary~\ref{cor:2a} hold
 for the KM \eqref{eqn:dsys3} and CL \eqref{eqn:csys},
 so that parts~(i)-(iii) hold when replacing $\mathbf{u}_n$ with $\mathbf{v}_n$.
We easily see that there exists $\tilde{\omega}_n(x)\in L^2(I)$
 such that $\omega_{\xi_n(i)}^n$, $i\in[n]$, satisfy \eqref{eqn:omega}
 when replacing $\omega(x)$ with $\tilde{\omega}_n(x)$, and
\[
\lim_{n\to\infty}\|F^{-1}(x)-\tilde{\omega}_n(x)\|=0\quad\mbox{a.s.}
\]
Hence, for any $\epsilon,\tau>0$ there exists $N>0$ such that if $n>N$ and
\[
\|T_{\xi_n}\mathbf{u}_n(0)-\mathbf{v}_n(0)\|=0\quad\mbox{a.s.},
\]
then
\[
\|T_{\xi_n}\mathbf{u}_n(\tau)-\mathbf{v}_n(\tau)\|<\epsilon
\quad\mbox{a.s.}
\]
by the continuity of solutions with respect to parameters
 (e.g., Theorem~7.4 in Chapter~1 of \cite{CL55}).
Thus, we obtain the desired results.
\end{proof}

\begin{rmk}\
\label{rmk:3b}
\begin{enumerate}
\setlength{\leftskip}{-1.8em}
\item[\rm(i)]
Obviously, the statements of Theorem~$\ref{thm:3a}$ would not hold
 when the graphs $G_n$, $n\in\Nset$, were not uniform.
\item[\rm(ii)]
Theorem~{\rm\ref{thm:3a}(ii)} implies that
 the asymptotically stable family $\U$ in the CL \eqref{eqn:csys}
 behaves in the $L^2(I)$ sense 
 as if it is an asymptotically stable family of solutions to the KM \eqref{eqn:dsys}
 under an appropriate uniform random permutation.
\end{enumerate}
\end{rmk}


\section{Uniform Distribution Case}

We now consider the case in which $\omega_i^n$, $i\in[n]$,
 are uniformly distributed on $[-\tfrac{1}{2}a,\tfrac{1}{2}a]$, i.e., $\rho(\omega)=1/a$.
Using the results of Section~2.2,
 we can obtain stronger statements for the KM \eqref{eqn:dsys} than Theorem~\ref{thm:3a}.

We compute
\[
F(\omega)=\frac{1}{a}(x+\tfrac{1}{2}a)=\frac{2x+a}{2a}
\]
and
\[
F^{-1}(x)=a(x-\tfrac{1}{2}).
\]
Moreover, Eq.~\eqref{eqn:ep} holds when replacing $\omega_i^n$ with $\nu_n(i)$ for $i\in[n]$,
 i.e., they are equally placed:
\[
\nu_n(i)=\frac{a}{2n}(2i-n-1).
\]
Let $\U=\{\bar{\mathbf{u}}(t)+\boldsymbol{\theta}\mid\theta\in\Sset^1\}$
 (resp. $\hat{\U}=\{\hat{\mathbf{u}}(t)+\boldsymbol{\theta}\mid\theta\in\Sset^1\}$)
 denote the family of solutions given by \eqref{eqn:csol0}
 (resp. \eqref{eqn:csol1} or \eqref{eqn:csol2}) to the CL \eqref{eqn:csys}
 with \eqref{eqn:lomega}.
By Theorem~\ref{thm:2g}, $\U$ is asymptotically stable
 and $\hat{\U}$ is unstable in the CL \eqref{eqn:csys}.
Using Theorem~\ref{thm:3a}(ii) and (iii),
 we obtain the following.

\begin{thm}
\label{thm:4a}
Suppose that $\omega_i^n$, $i\in[n]$, 
 are uniformly distributed on $[-\tfrac{1}{2}a,\tfrac{1}{2}a]$.
Then the following hold$:$
\begin{enumerate}
\setlength{\leftskip}{-1.6em}
\item[\rm(i)]
For any $\epsilon,\tau>0$ there exists $\delta>0$ such that for $n>0$ sufficiently large,
 if $\mathbf{u}_n(t)$ is a solution to the KM \eqref{eqn:dsys} satisfying
\[
\min_{\theta\in\Sset^1}
 \|T_{\xi_n}\mathbf{u}_n(0)-\bar{\mathbf{u}}(0)-\boldsymbol\theta\|<\delta
\quad\mbox{a.s.},
 \]
then
\[
\max_{t\in[0,\tau]}\min_{\theta\in\Sset^1}
 \|T_{\xi_n}\mathbf{u}_n(t)-\bar{\mathbf{u}}(t)-\boldsymbol\theta\|<\epsilon
\quad\mbox{a.s.}
\]
Moreover,
\[
\lim_{t\to\infty}\lim_{n\to\infty}\min_{\theta\in\Sset^1}
 \|T_{\xi_n}\mathbf{u}_n(t)-\bar{\mathbf{u}}(t)-\boldsymbol\theta\|
 =0\quad\mbox{a.s.}
\]
\item[\rm(ii)]
For any $\epsilon,\delta>0$ there exists $\tau>0$ such that for $n>0$ sufficiently large
\[
\min_{\theta\in\Sset^1}\|T_{\xi_n}\mathbf{u}_n(\tau)-\hat{\mathbf{u}}(\tau)-\boldsymbol{\theta}\|
 >\epsilon\quad\mbox{a.s.},
\]
where $\mathbf{u}_n(t)$ is a solution to the KM \eqref{eqn:dsys} satisfying
\[
\min_{\theta\in\Sset^1}\|T_{\xi_n}\mathbf{u}_n(0)-\hat{\mathbf{u}}(0)-\boldsymbol\theta\|<\delta\quad\mbox{a.s.}
\]
\end{enumerate}
\end{thm}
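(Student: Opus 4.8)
The plan is to observe that the uniform-distribution hypothesis reduces the situation to exactly the linear frequency function \eqref{eqn:lomega} treated in Section~2.2, so that Theorem~\ref{thm:4a} follows by specializing Theorem~\ref{thm:3a}. First I would record the two computations made just above the statement: for $\rho(\omega)=1/a$ one has $F^{-1}(x)=a(x-\tfrac{1}{2})$, which is precisely \eqref{eqn:lomega} with the same $a>0$, and the associated equally placed frequencies $\nu_n(i)=\frac{a}{2n}(2i-n-1)$ coincide with \eqref{eqn:ep}. Hence the frequency function $\omega(x)=F^{-1}(x)$ that enters the hypothesis of Theorem~\ref{thm:3a} is identical to \eqref{eqn:lomega}, and the continuum limit \eqref{eqn:csys} to which that theorem refers is exactly the one whose continuous and discontinuous stationary families \eqref{eqn:csol0}, \eqref{eqn:csol1} and \eqref{eqn:csol2} were classified in Section~2.2.

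With this identification the stability data come entirely from Theorem~\ref{thm:2g}. For part~(i) I would take $\bar{\mathbf{u}}(t)$ to be the continuous stationary family \eqref{eqn:csol0}, which by Theorem~\ref{thm:2g}(i) makes $\U$ asymptotically stable in the CL \eqref{eqn:csys}. For part~(ii) I would take $\hat{\mathbf{u}}(t)$ to be either the continuous family \eqref{eqn:csol1} or a discontinuous family \eqref{eqn:csol2}; Theorem~\ref{thm:2g}(i)--(ii) then makes $\hat{\U}$ unstable, the discontinuous case being covered since such a family differs from \eqref{eqn:csol0} in $L^2(I)$ precisely when $\hat{\U}\neq\U$, which is the only case of interest.

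I would then invoke Theorem~\ref{thm:3a} directly. Because $\U$ is asymptotically stable, Theorem~\ref{thm:3a}(ii) supplies both the finite-time closeness bound and the limit $\lim_{t\to\infty}\lim_{n\to\infty}\min_{\theta\in\Sset^1}\|T_{\xi_n}\mathbf{u}_n(t)-\bar{\mathbf{u}}(t)-\boldsymbol\theta\|=0$ giving part~(i); because $\hat{\U}$ is unstable, Theorem~\ref{thm:3a}(iii) supplies the eventual separation bound giving part~(ii). No estimate beyond those already contained in Theorem~\ref{thm:3a} is needed.

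Since the argument is a pure specialization, I do not expect a genuine analytic obstacle. The one point requiring care is the bookkeeping already present in the proof of Theorem~\ref{thm:3a}: the auxiliary system \eqref{eqn:dsys3} and its continuum limit are built from $\omega(x)=F^{-1}(x)$, and I must make explicit that in the uniform case this frequency function literally equals \eqref{eqn:lomega}, so that Theorem~\ref{thm:2g} --- stated only for \eqref{eqn:lomega} --- is legitimately applicable to $\U$ and $\hat{\U}$. Once this equality is noted, the two conclusions follow at once.
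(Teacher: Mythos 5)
Your proposal is correct and follows essentially the same route as the paper's own proof: the paper likewise treats parts (i) and (ii) as special cases of Theorem~\ref{thm:3a}(ii) and (iii) for the uniform distribution, invoking Theorem~\ref{thm:2g} for the asymptotic stability of $\U$ and instability of $\hat{\U}$ in the CL \eqref{eqn:csys}. Your explicit verification that $F^{-1}(x)=a(x-\tfrac{1}{2})$ coincides with \eqref{eqn:lomega} (so that Theorem~\ref{thm:2g} is legitimately applicable) is exactly the identification the paper relies on, stated there only implicitly.
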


\begin{proof}
Parts~(i) and (ii) are special cases of Theorem~\ref{thm:3a}(ii) and (iii) in which $F(\omega)=1/a$.
We note that by Theorem~\ref{thm:2g}
 $\U$ and $\hat{\U}$ are asymptotic stable and unstable families of stationary solutions
 in the CL \eqref{eqn:csys}, to obtain the desired result.
\end{proof}


The statements of Theorem~\ref{thm:4a} also hold
 without the uniform random permutation
 when $\omega_i^n$, $i\in[n]$, are deterministic
 and $\lim_{n\to\infty}\max_{i\in[n]}|\omega_i^n-\nu_n(i)|=0$,
 in particular they are equally placed, i.e., $\omega_i^n=\nu_n(i)$, $i\in[n]$,
 as follows:
 
\begin{cor}
\label{cor:4a}
Suppose that $\omega_i^n$, $i\in[n]$, are deterministic
 and $\lim_{n\to\infty}\max_{i\in[n]}|\omega_i^n-\nu_n(i)|=0$.
Then the following hold$:$
\begin{enumerate}
\setlength{\leftskip}{-1.6em}
\item[\rm(i)]
For any $\epsilon,\tau>0$ there exists $\delta>0$ such that for $n>0$ sufficiently large, if
\[
\min_{\theta\in\Sset^1}
 \|\mathbf{u}_n(0)-\bar{\mathbf{u}}(0)-\boldsymbol\theta\|<\delta\quad\mbox{a.s.},
 \]
then
\[
\max_{t\in[0,\tau]}\min_{\theta\in\Sset^1}
 \|\mathbf{u}_n(t)-\bar{\mathbf{u}}(t)-\boldsymbol\theta\|<\epsilon
\quad\mbox{a.s.}
\]
Moreover,
\[
\lim_{t\to\infty}\lim_{n\to\infty}\min_{\theta\in\Sset^1}
 \|\mathbf{u}_n(t)-\bar{\mathbf{u}}(t)-\boldsymbol\theta\|
 =0\quad\mbox{a.s.}
\]
\item[\rm(ii)]
For any $\epsilon,\delta>0$ there exists $\tau>0$ such that for $n>0$ sufficiently large
\[
\min_{\theta\in\Sset^1}\|\mathbf{u}_n(\tau)-\hat{\mathbf{u}}(\tau)-\boldsymbol{\theta}\|
 >\epsilon\quad\mbox{a.s.},
\]
where $\mathbf{u}_n(t)$ is a solution to the KM \eqref{eqn:dsys} satisfying
\[
\min_{\theta\in\Sset^1}\|\mathbf{u}_n(0)-\hat{\mathbf{u}}(0)-\boldsymbol\theta\|<\delta\quad\mbox{a.s.}
\]
\end{enumerate}
\end{cor}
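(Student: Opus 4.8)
The plan is to follow the proof of Theorem~\ref{thm:3a} almost verbatim, the key simplification being that the natural frequencies $\omega_i^n$ are now deterministic and already lie within $\max_{i\in[n]}|\omega_i^n-\nu_n(i)|\to 0$ of the equally-placed frequencies $\nu_n(i)=\frac{a}{2n}(2i-n-1)$. Since these $\nu_n(i)$ are already arranged in increasing order, there is no need to sort the $\omega_i^n$ by a random permutation, and so the operator $T_{\xi_n}$ disappears from every statement. First I would introduce the reference KM
\[
\frac{\d}{\d t}v_i^n(t)=\nu_n(i)+\frac{K}{n\alpha_n}\sum_{j=1}^n w_{ij}^n\sin(v_j^n(t)-v_i^n(t)),\quad i\in[n],
\]
which carries the same weights $w_{ij}^n$ as the actual KM \eqref{eqn:dsys} but replaces $\omega_i^n$ by $\nu_n(i)$.

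Because the $\nu_n(i)$ satisfy \eqref{eqn:omega} with the frequency function $\omega(x)=F^{-1}(x)=a(x-\tfrac{1}{2})$, namely \eqref{eqn:lomega}, the CL associated with this reference KM is precisely \eqref{eqn:csys} with \eqref{eqn:lomega}. Hence Theorem~\ref{thm:2g} applies: $\U$ is asymptotically stable and $\hat{\U}$ is unstable in that CL. Consequently Corollary~\ref{cor:2a} yields conclusion~(i) and Theorem~\ref{thm:2e} yields conclusion~(ii), in each case with the reference solution $\mathbf{v}_n$ in place of $\mathbf{u}_n$ and without any permutation.

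It then remains to pass from $\mathbf{v}_n$ back to the actual solution $\mathbf{u}_n$. Since the two KMs differ only in their frequency parameters, and the hypothesis $\max_{i\in[n]}|\omega_i^n-\nu_n(i)|\to 0$ forces this difference to vanish, continuity of solutions with respect to parameters (Theorem~7.4 in Chapter~1 of \cite{CL55}) guarantees that for any $\epsilon,\tau>0$ there is $N>0$ so that, for $n>N$ and a common initial condition, $\max_{t\in[0,\tau]}\|\mathbf{u}_n(t)-\mathbf{v}_n(t)\|<\epsilon$ a.s. This lets me substitute $\mathbf{u}_n$ for $\mathbf{v}_n$, up to an arbitrarily small error, in the conclusions of the previous step. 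The hard part will be the iterated limit $\lim_{t\to\infty}\lim_{n\to\infty}(\cdots)=0$ in~(i), because the parameter-continuity estimate controls $\|\mathbf{u}_n(t)-\mathbf{v}_n(t)\|$ only on compact time intervals and not uniformly in $t$. As in Theorem~\ref{thm:3a}, this is handled by taking the inner limit $n\to\infty$ first, where $\mathbf{u}_n$ and $\mathbf{v}_n$ become indistinguishable for each fixed $t$, and only then letting $t\to\infty$, where the asymptotic stability of $\U$ in the reference CL supplies the decay; in the special case $\omega_i^n=\nu_n(i)$ the two KMs coincide and this transfer step is vacuous.
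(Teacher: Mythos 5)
Your proposal is correct and takes essentially the same route as the paper: Corollary~\ref{cor:4a} is stated there as the permutation-free specialization of Theorems~\ref{thm:3a} and \ref{thm:4a}, and your argument---the reference KM with frequencies $\nu_n(i)$, whose CL is \eqref{eqn:csys} with \eqref{eqn:lomega}, stability of $\U$ and instability of $\hat{\U}$ from Theorem~\ref{thm:2g}, conclusions via Corollary~\ref{cor:2a} and Theorem~\ref{thm:2e}, and the transfer to $\mathbf{u}_n$ by parameter continuity, with the hypothesis $\max_{i\in[n]}|\omega_i^n-\nu_n(i)|\to 0$ playing the role of Proposition~\ref{prop:3a}---is exactly the paper's proof of Theorem~\ref{thm:3a} with the identity permutation. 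Your treatment of the iterated limit (inner limit $n\to\infty$ on compact time intervals first, then $t\to\infty$ using asymptotic stability of $\U$) also mirrors the paper's argument.
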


In Section~3 of  \cite{IY23},
 numerical simulation results for the KM \eqref{eqn:dsys}
 having equally placed natural frequencies
 and defined on complete simple and uniform random dense and sparse graphs
 were given and its responses were observed to converge
 to the solutions corresponding to \eqref{eqn:dsys} of the CL \eqref{eqn:csys}
 as $t\to\infty$ in the $L^2(I)$ sense
 although fluctuations due to the randomness were present
 for the random graphs.


\section{Numerical Simulations}

Finally, we give numerical simulation results for the KM \eqref{eqn:dsys}
 when the natural frequencies $\omega_i^n$, $i\in[n]$,
 are uniformly distributed on $[-\tfrac{1}{2}a,\tfrac{1}{2}a]$, as in Section~4.
We choose this simple case
 since the existence and stability of continuous and discontinuous solutions
 in the corresponding CL \eqref{eqn:csys} with \eqref{eqn:lomega} are detected
 as described in Section~2.2,
 and the dynamics of the KM \eqref{eqn:dsys} with the random natural frequencies
 are well understood, as stated in Theorem~\ref{thm:4a}.
The situation is different for non-uniform distributions of the natural frequencies.

\begin{figure}[t]
\begin{minipage}[t]{0.495\textwidth}
\begin{center}
\includegraphics[scale=0.32]{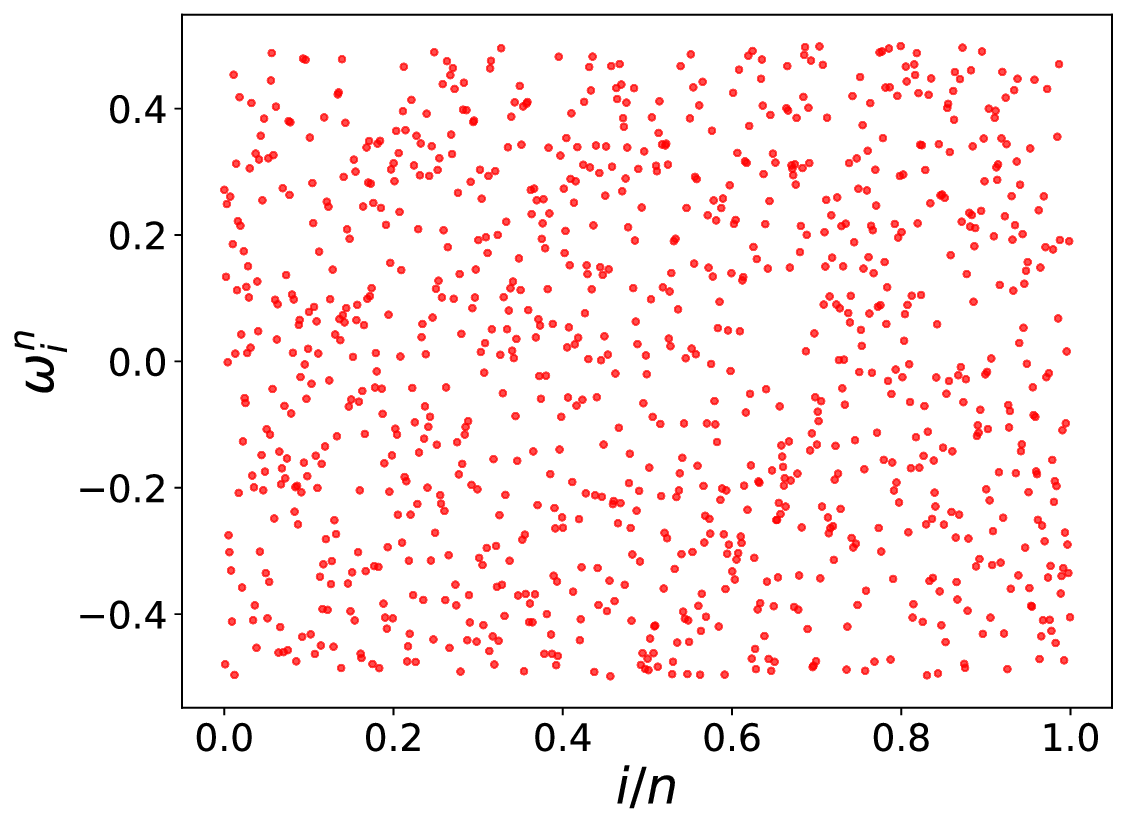}\\
{\footnotesize(a)}
\end{center}
\end{minipage}
\begin{minipage}[t]{0.495\textwidth}
\begin{center}
\includegraphics[scale=0.32]{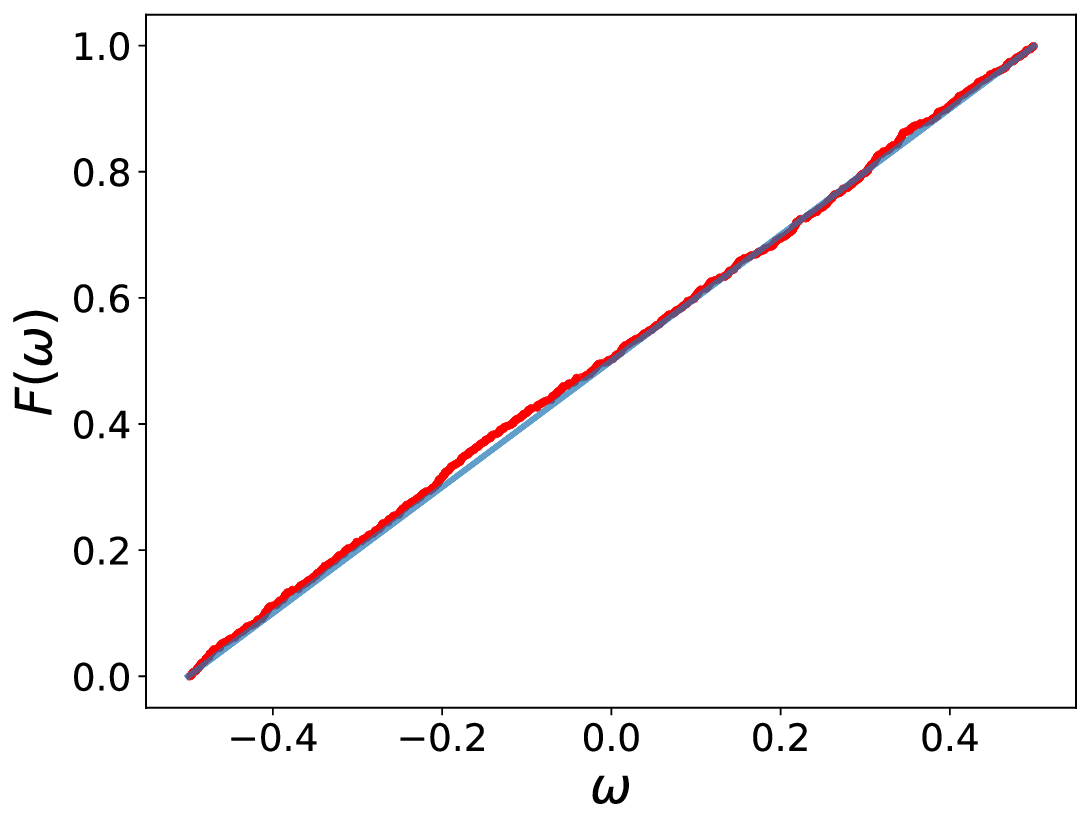}\\
{\footnotesize(b)}
\end{center}
\end{minipage}
\caption{Random natural frequencies $\omega_j^n$, $i\in[n]$,
 uniformly distributed on $[-\tfrac{1}{2}a,\tfrac{1}{2}a]$ with $n=1000$ and $a=1$:
(a) Numerically computed sample;
(b) distribution function $F(\omega)$ obtained from the sample of Fig.~(a).
Numerically generated ones are plotted as small red disks in both figures,
 and the uniform distribution as a blue line for comparison in Fig.~(b).
\label{fig:5a1}}
\end{figure}

\begin{figure}[t]
\begin{minipage}[t]{0.495\textwidth}
\begin{center}
\includegraphics[scale=0.3]{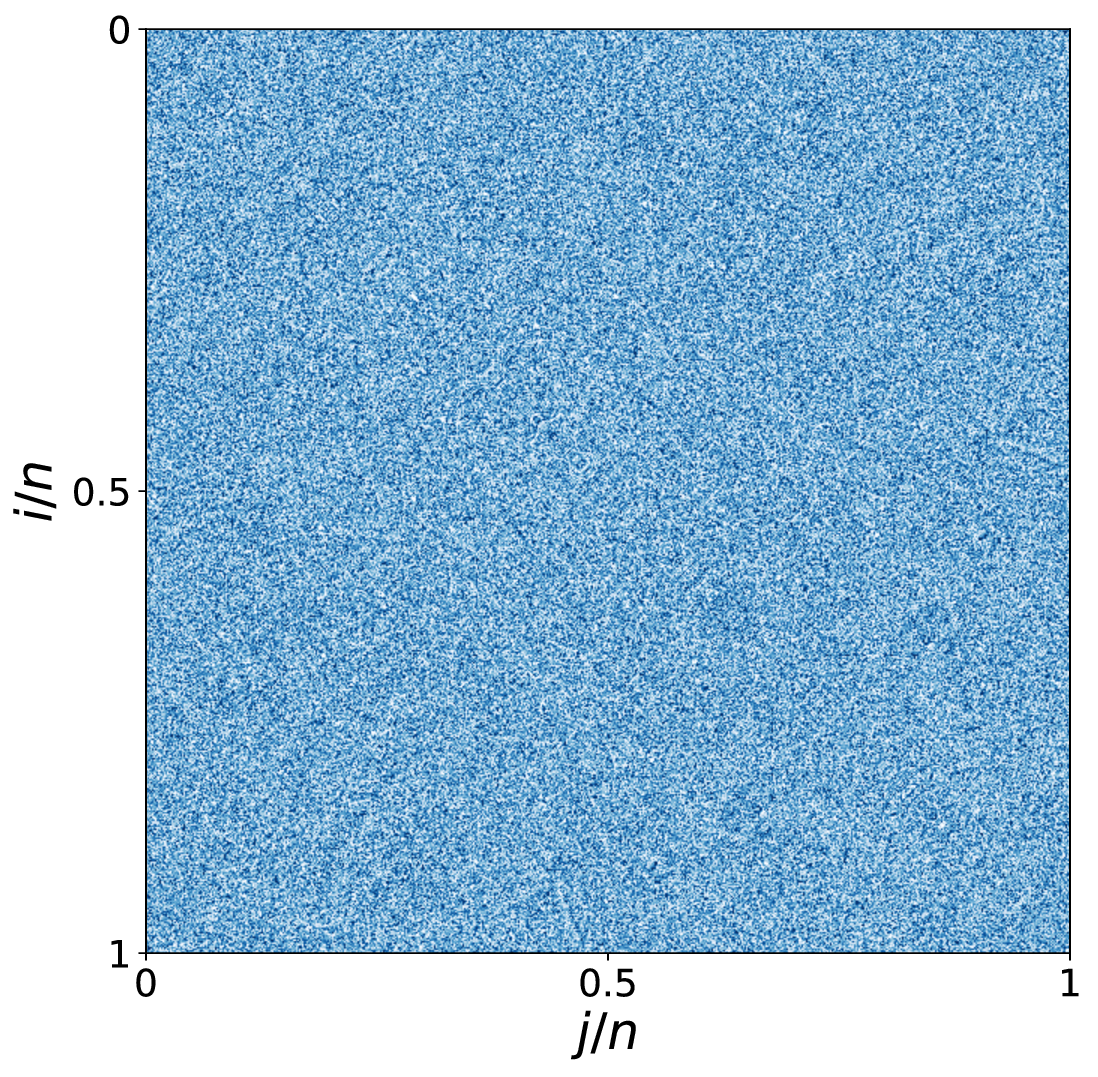}\\
{\footnotesize(a)}
\end{center}
\end{minipage}
\begin{minipage}[t]{0.495\textwidth}
\begin{center}
\includegraphics[scale=0.3]{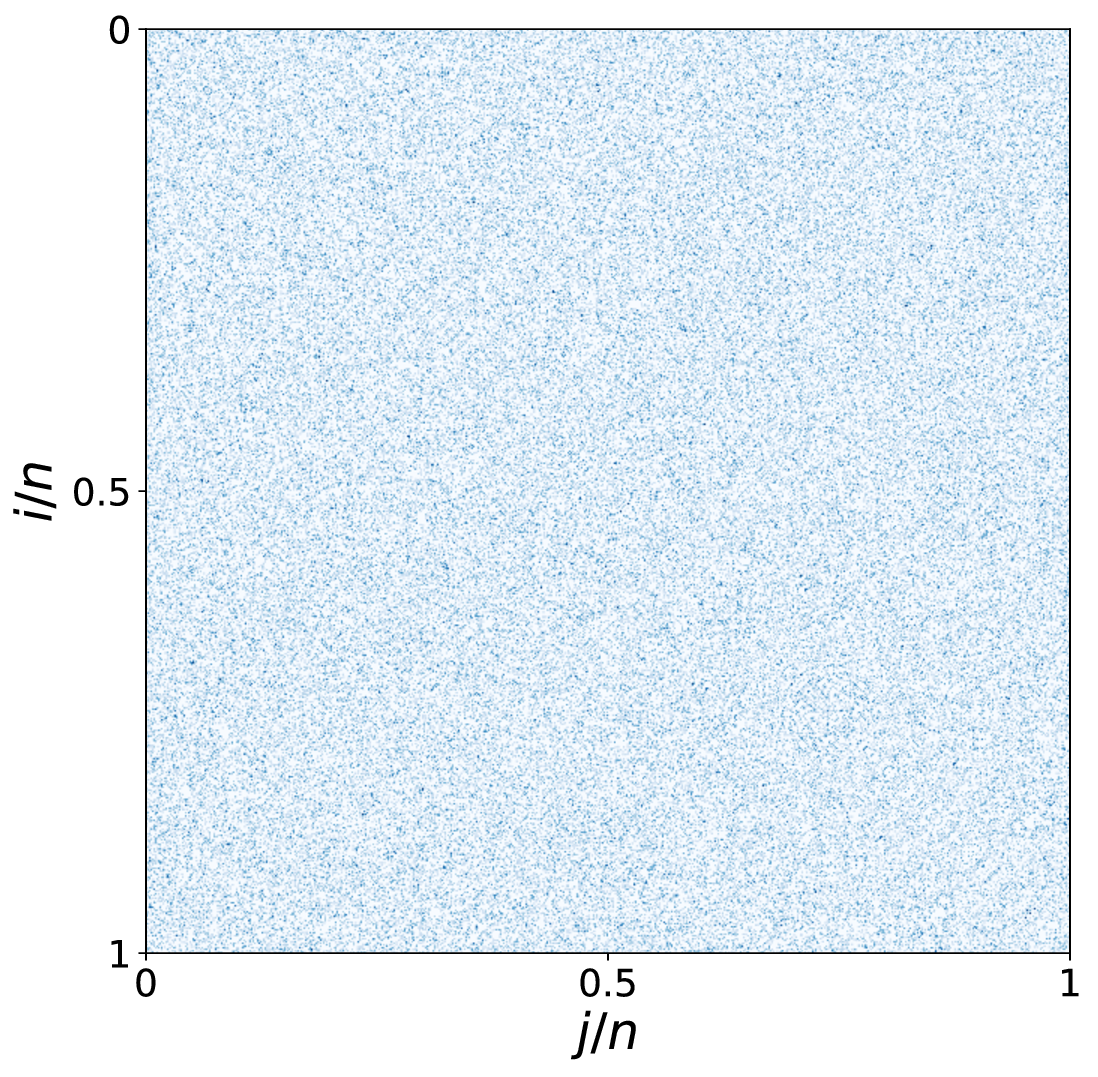}\\
{\footnotesize(b)}
\end{center}
\end{minipage}
\caption{Pixel picture of sampled weight matrices of the random undirected graphs
 given by  $w_{ij}^n=1$, $i,j\in[n]$, with probability \eqref{eqn:prob1} and  \eqref{eqn:prob2}
 for $n=1000$:
(a) Dense graph with $p=0.5$;
(b) sparse graph with $p=1$ and $\gamma=0.3$.
The color of the corresponding pixel is blue if $w_{ij}^n=1$
 and it is light blue otherwise.
\label{fig:5a2}}
\end{figure}

We consider the following three cases for $G_{n}$:
\begin{enumerate}
\setlength{\leftskip}{-1.8em}
\item[(i)]
Complete simple graph, which is deterministic dense with $p=1$;
\item[(ii)]
Random undirected dense graph in which $w_{ij}^n=1$ with probability
\begin{equation}
\Pset(j\sim i)=p,\quad
i,j\in[n];
\label{eqn:prob1}
\end{equation}
\item[(iii)]
Random undirected sparse graph in which $w_{ij}^n=1$ with probability
\begin{equation}
\Pset(j\sim i)=n^{-\gamma}p,\quad
i,j\in[n],
\label{eqn:prob2}
\end{equation}
where $\gamma\in(0,\tfrac{1}{2})$.
\end{enumerate}
Note that $\alpha_n^{-1}=n^\gamma>1$.
We specifically take $p=0.5$ in case~(ii) and $p=1$ and $\gamma=0.3$ in case~(iii).
Similar numerical simulation results when the natural frequencies are deterministic
 were given in Section~3 of \cite{IY23}, as stated in Section~4.
Figures~\ref{fig:5a1} and \ref{fig:5a2},
 respectively, provide numerically computed samples
 of the natural frequencies and weight matrices
 for both the random undirected dense and sparse graphs,
 where $n=1000$, $p=0.5$ or $1$ and $\gamma=0.3$.
 
\begin{figure}[t]
\begin{center}
\includegraphics[scale=0.275]{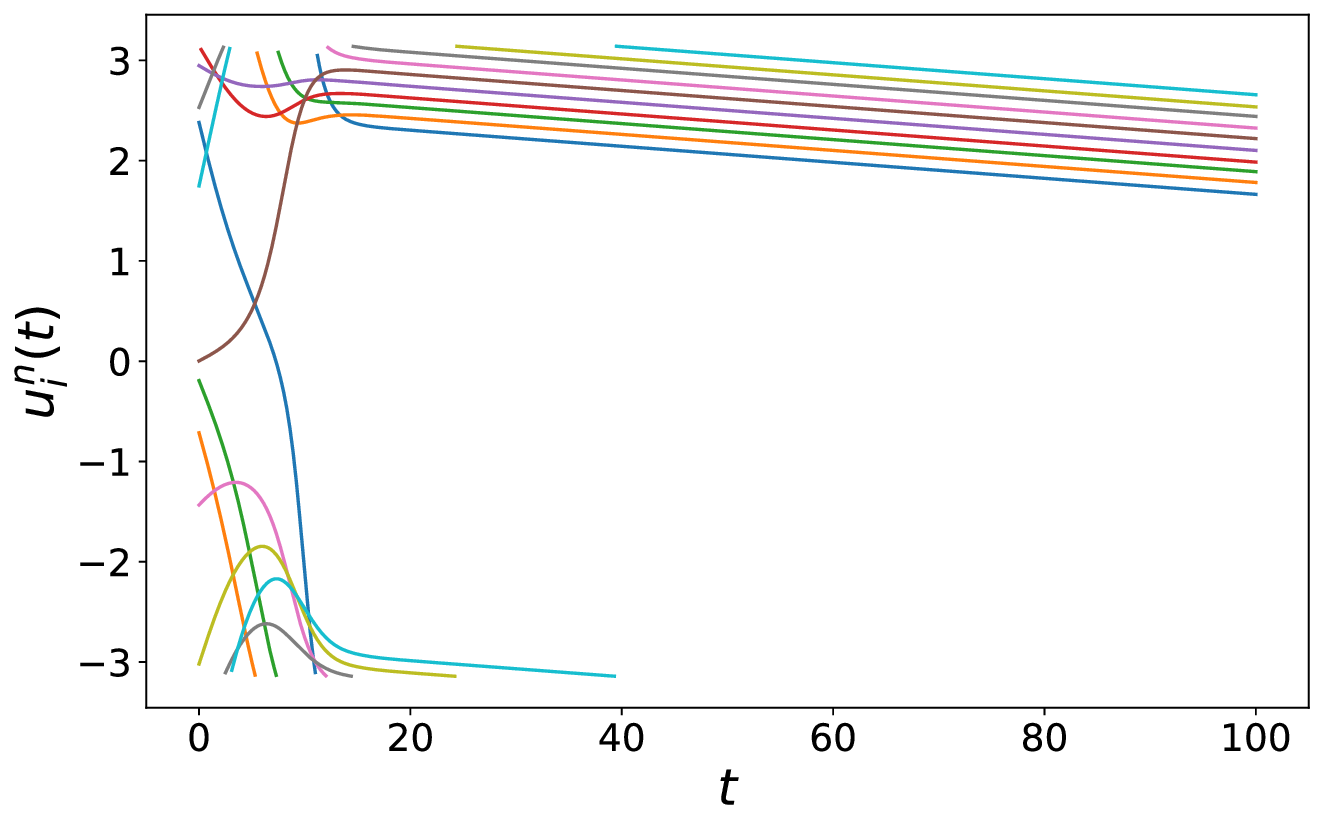}\\[-1ex]
{\footnotesize(a)}\\[2ex]
\end{center}
\begin{minipage}[t]{0.495\textwidth}
\begin{center}
\includegraphics[scale=0.275]{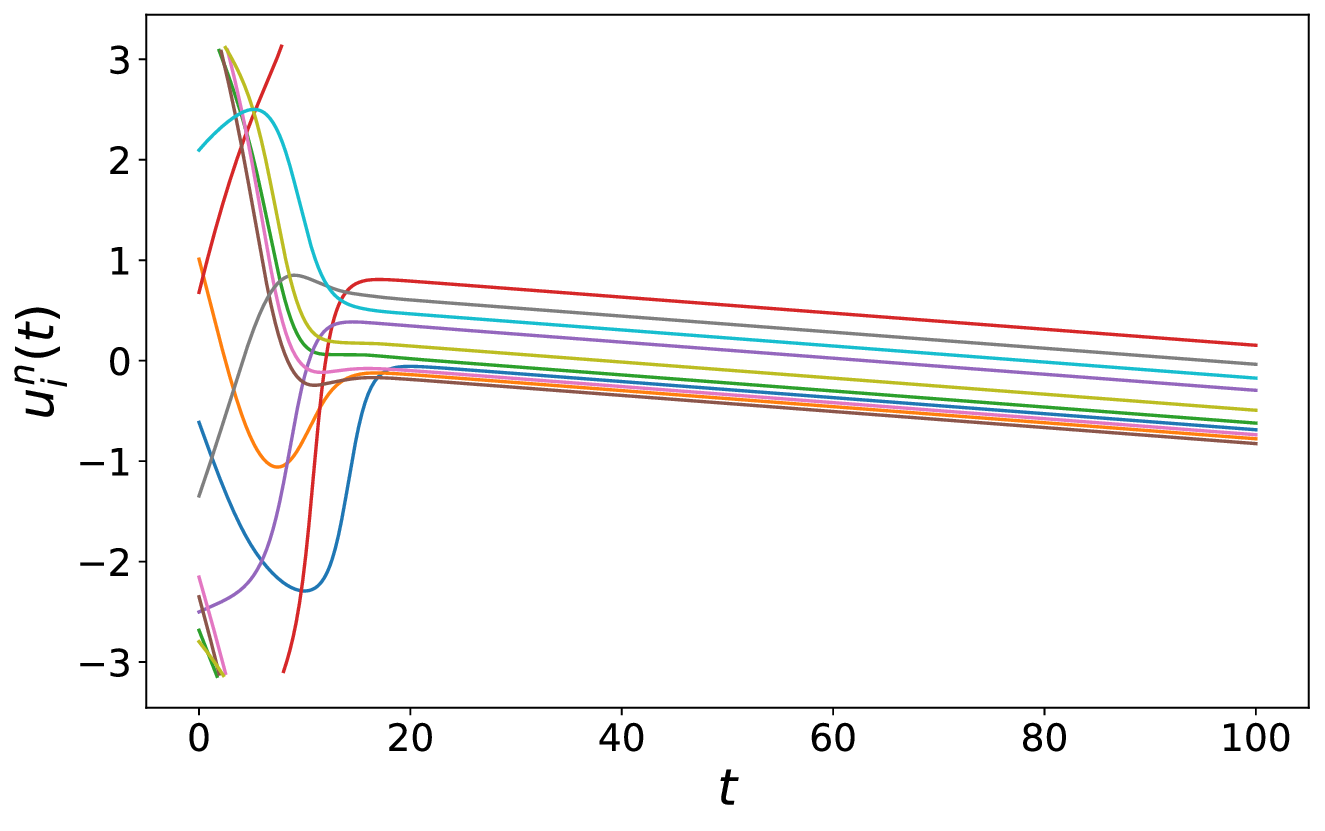}\\[-1ex]
{\footnotesize(b)}
\end{center}
\end{minipage}
\begin{minipage}[t]{0.495\textwidth}
\begin{center}
\includegraphics[scale=0.275]{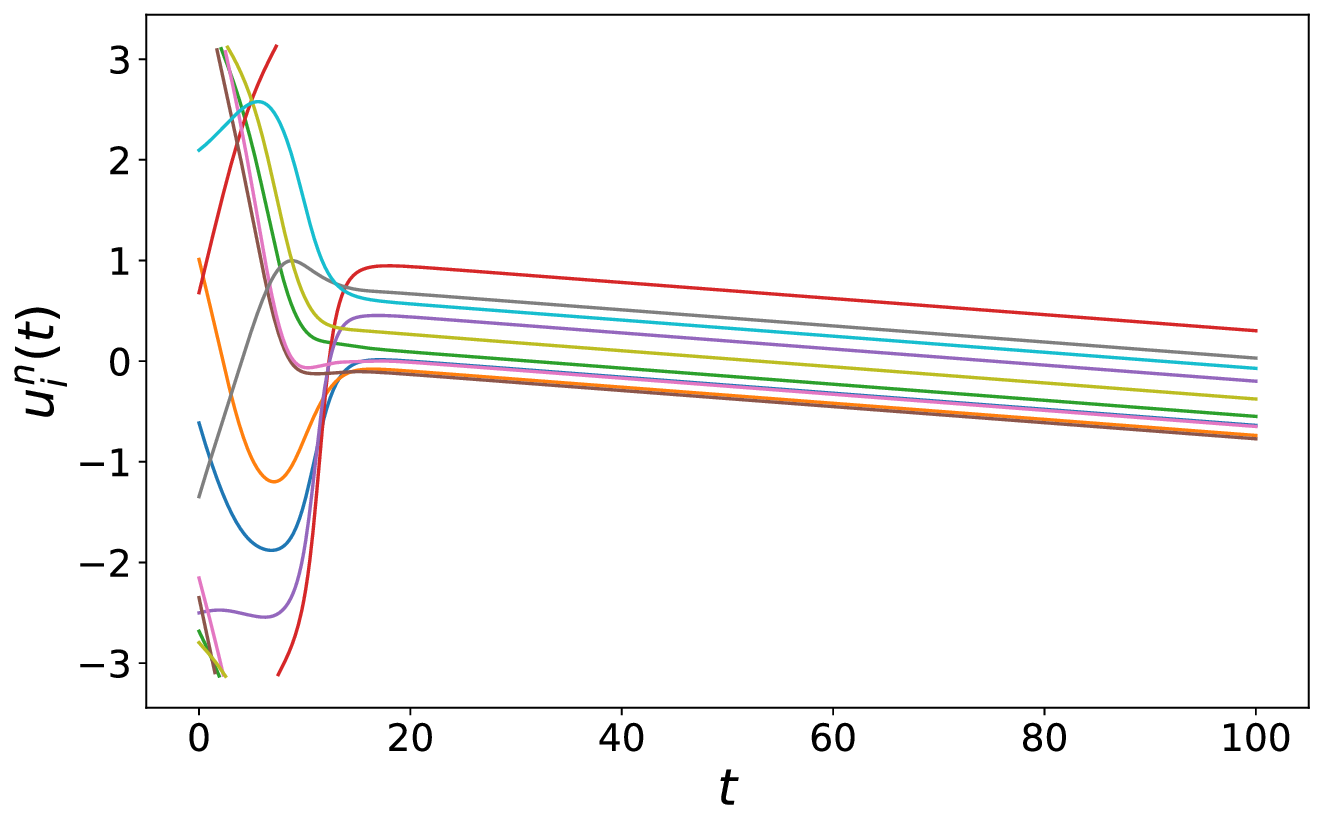}\\[-1ex]
{\footnotesize(c)}
\end{center}
\end{minipage}
\caption{Numerical simulation results for the KM \eqref{eqn:dsys} with $n=1000$:
(a) $(K,a)=(1,1)$ in case~(i); (b) $(K,a,p)=(2,1,0.5)$ in case (ii);
(c) $(K,a,p,\gamma)=(1,1,1,0.3)$ in case~(iii).
The time history of every 100th node
 (from 50th to 950th) is plotted with a different color.
Note that $p=1$ in case~(i) and $\gamma$ has a meaning  only in case~(iii).
\label{fig:5b}}
\end{figure}

We carried out numerical simulations for the KM \eqref{eqn:dsys}
 with $n=1000$ in cases~(i)-(iii), using the DOP853 solver,
 for which an exposition of the used method is found in \cite{HNW93}.
The initial values $u_i^n(0)$, $i\in[n]$,
 were independently randomly chosen on $[-\pi,\pi]$.

Figures~\ref{fig:5b}(a), (b) and (c)
 show the time-histories of every $100$th node (from 50th to 950th).
 for cases~(i), (ii) and (iii), respectively.
Here $u_i^n(t)$, $i=50,150,\ldots,950$, are plotted in the range of $[-\pi,\pi]$.
The value of $pK/a=1$ is larger than the critical value $2/\pi$,
 which is given in Theorem~\ref{thm:2f}.
We observe that the responses $u_i^n(t)$, $i\in[n]$,
 converge to the completely synchronized states in these figures,
 as predicted by Theorem~\ref{thm:4a}.
We also see that they exhibit rotational motions with a constant speed
 since the most probable value $\Omega$ in \eqref{eqn:Omega}
 is not zero by the asymmetric property of $\omega_i^n$, $i\in[n]$.
We note that the same sample of the natural frequencies was used
 in Fig.~\ref{fig:5b}(a)-(c).

\begin{figure}[t]
\begin{center}
\includegraphics[scale=0.28]{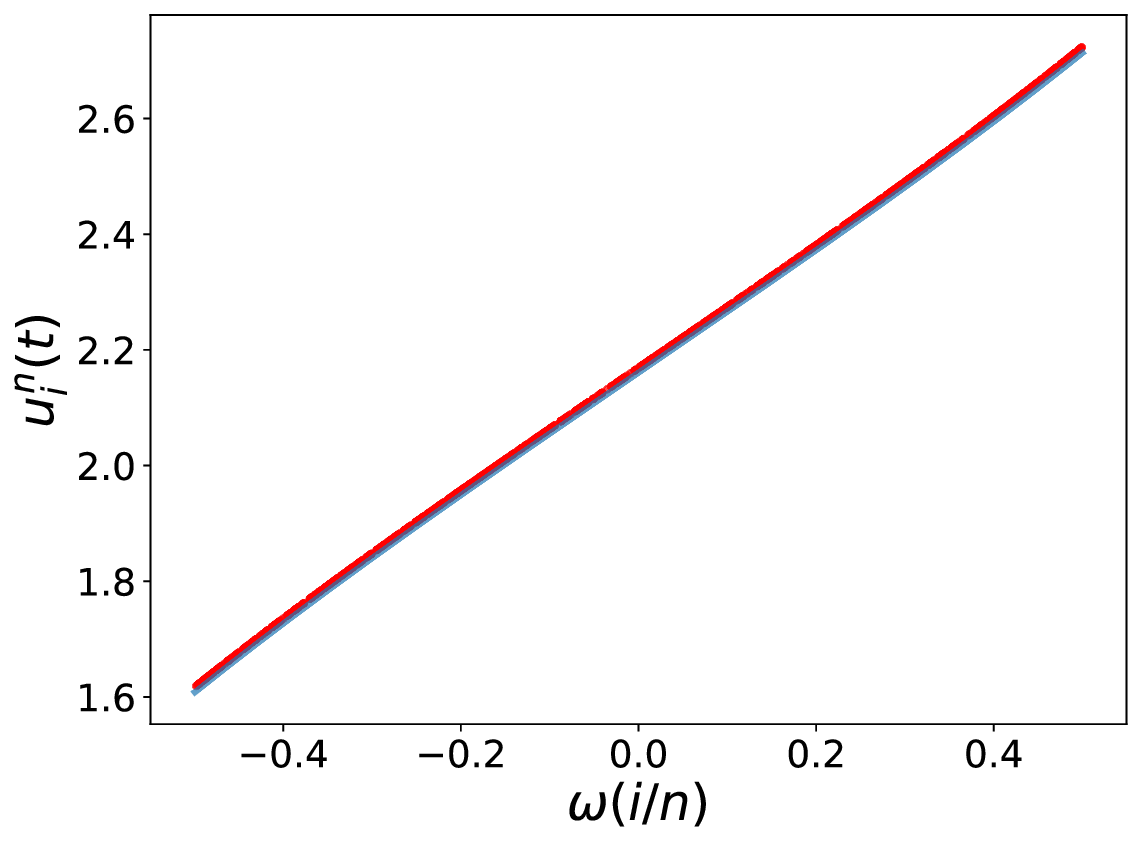}\\[-1ex]
{\footnotesize(a)}\\[2ex]
\end{center}
\begin{minipage}[t]{0.495\textwidth}
\begin{center}
\includegraphics[scale=0.28]{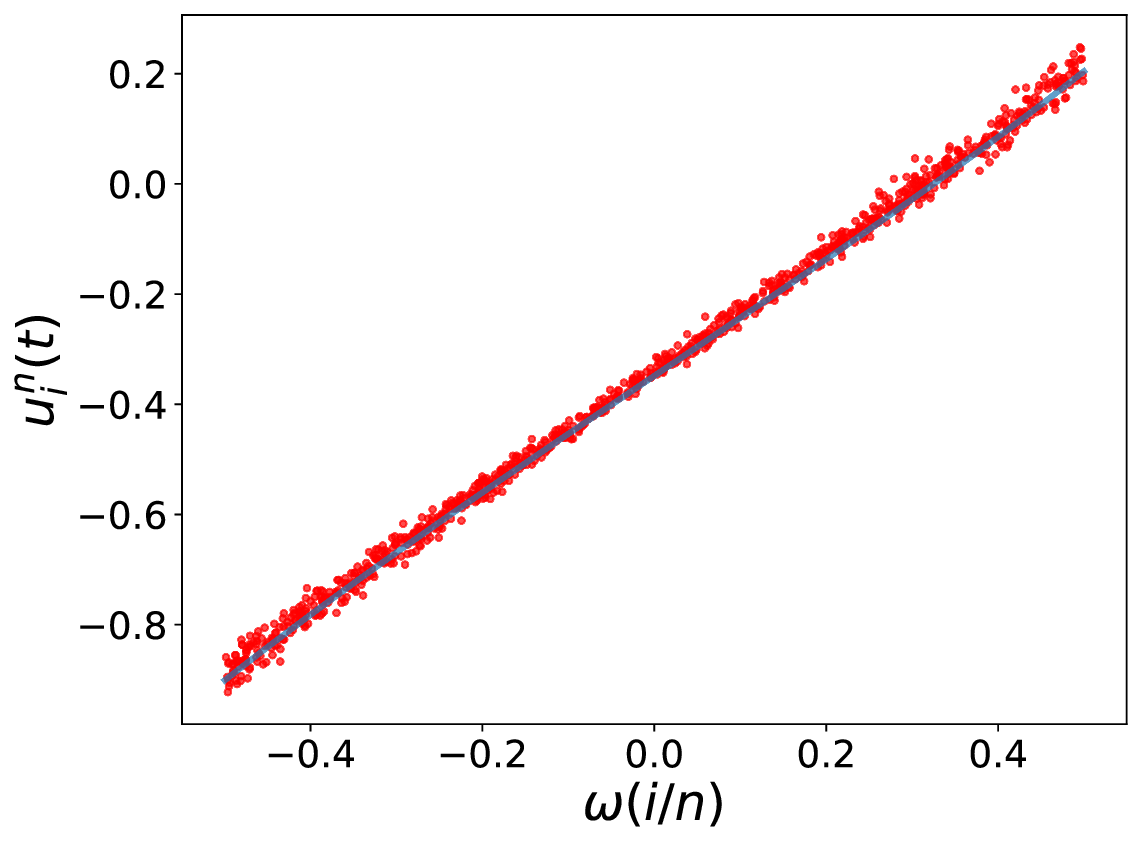}\\[-1ex]
{\footnotesize(b)}
\end{center}
\end{minipage}
\begin{minipage}[t]{0.495\textwidth}
\begin{center}
\includegraphics[scale=0.28]{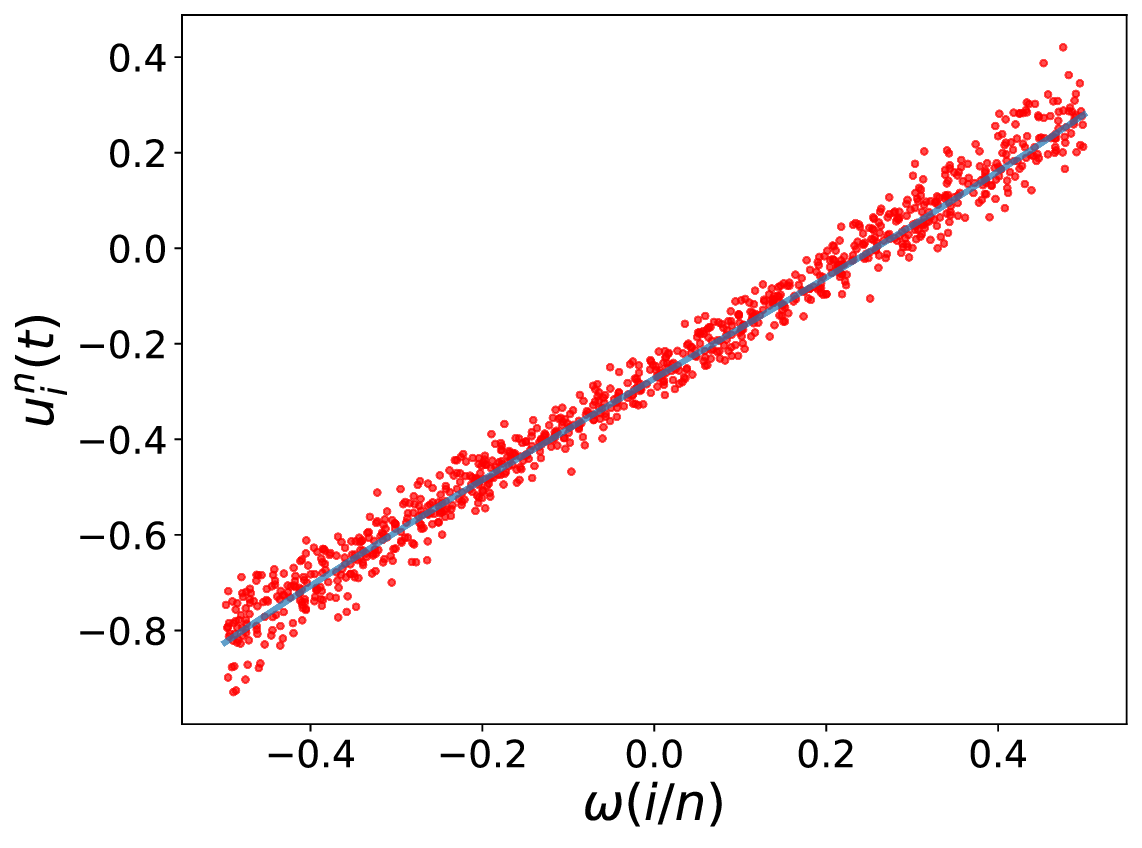}\\[-1ex]
{\footnotesize(c)}
\end{center}
\end{minipage}
\caption{Steady states of the KM \eqref{eqn:dsys} with $n=1000$:
(a) $(K,a)=(1,1)$ in case~(i); (b) $(K,a,p)=(2,1,0.5)$ in case (ii);
(c) $(K,a,p,\gamma)=(1,1,1,0.3)$ in case~(iii).
Here $u_i^n(t)$, $i\in[n]$, with $t=100$ are plotted as red dots.
The blue line represents the continuous stationary solution \eqref{eqn:csol0}
 to the CL \eqref{eqn:csys},
 where $\theta$ is the mean values of $u_i^n(100)$, $i\in[n]$.
\label{fig:5c}}
\end{figure}
 
In Figs.~\ref{fig:5c}(a), (b) and (c), $u_i^n(t)$, $i\in[n]$, with $t=100$,
 which may be regarded as the steady states from the results of Fig.~\ref{fig:5b},
 are plotted for cases~(i), (ii) and (iii), respectively.
Here the same initial condition, natural frequencies,
and values of $K$, $a$, $p$ and $\gamma$ as in Fig.~\ref{fig:5b} were used.
The blue line in each figure
 represents the continuous stationary solution \eqref{eqn:csol0} to the CL \eqref{eqn:csys},
 where  $\theta$ is the mean values of $u_i^n(100)$, $i\in[n]$, i.e.,
\[
\theta=\frac{1}{n}\sum_{i=1}^nu_i^n(100),
\]
for the corresponding result of Fig.~\ref{fig:5c}.
See Table~\ref{tbl:5a} for the estimated value of $\theta$ for each figure.
The agreement between the numerical results
 and theoretical predictions by Theorem~\ref{thm:4a}
 is fine for case~(i) in Fig.~\ref{fig:5c} (a)
 and good for case~(ii) in Fig.~\ref{fig:5c}(b)
 although some fluctuations due to randomness are found in the latter.
However, it is not so good for case~(iii) in Fig.~\ref{fig:5c}(c).
The reason for this disagreement is considered to be that
 the node number $n=1000$ is not enough
 for approximation of the KM \eqref{eqn:dsys} by the CL \eqref{eqn:csys} in case~(iii).
Such an observation for the difference between random sparse graphs
 and deterministic or random dense ones was given
 for the deterministic natural frequency cases \cite{IY23,Y24b,Y24c}.

\begin{table}[t]
\caption{
Mean value $\theta$ of $u_i^n(100)$,
 for the numerical results of Fig.~\ref{fig:5c}.
The numbers are rounded up to the fifth decimal point.
\label{tbl:5a}}
\begin{tabular}{c|c|c|c}
 & (a)  &  (b) & (c)\\
\hline
$\theta$ & $2.16107$  & $ -0.34812$ & $ -0.27273$\\
\end{tabular}
\end{table}

\begin{figure}[t]
\begin{center}
\includegraphics[scale=0.5]{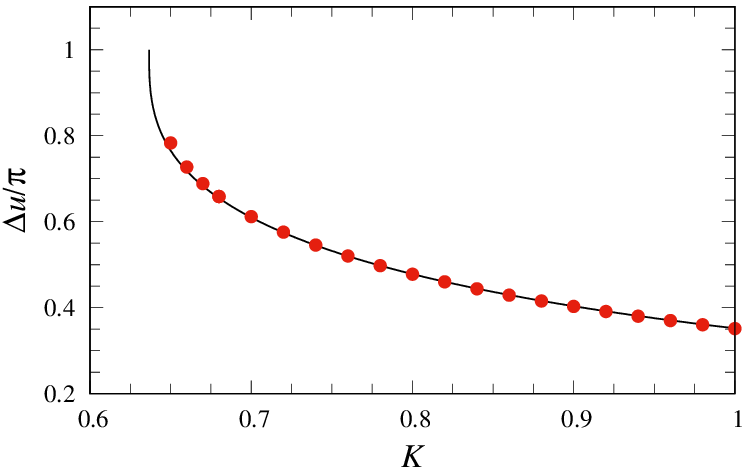}
\end{center}
\caption{Numerically computed bifurcation diagrams for the KM \eqref{eqn:dsys}
 with $n=1000$ in case~(i):
The ordinate $\Delta u$ represents the difference between $u_i^n(t)$ at the nodes 
 where $\omega_i^n$ is maximum or minimum when $t$ is sufficiently large.
The small red disks and black line represent numerical computations
 and theoretical predictions for the CL \eqref{eqn:csys} in Theorem~\ref{thm:4a}.
\label{fig:5d}}
\end{figure}

In Fig.~\ref{fig:5d}
 we give numerically computed bifurcation diagrams for the KM \eqref{eqn:dsys}
 with $n=1000$ in case~(i) when $K$ is taken as the control parameter.
The ordinate $\Delta u$ represents the difference
 between $u_i^n(t)$, $i=i_1$ and $i=i_2$, i.e.,
\[
\Delta u=u_{i_2}^n(t)-u_{i_1}^n(t),
\]
where $\omega_i^n$ is minimum and maximum when $i=i_1$ and $i_2$, respectively,
 for $t>0$ sufficiently large.
Numerical computations are plotted as small red disks,
 and the corresponding quantity for the continuous stationary solution \eqref{eqn:csol0}
 to the CL \eqref{eqn:csys}, i.e.,
\[
\Delta u=2\arcsin\left(\frac{a}{2KC}\right),
\]
as the black line for comparison.
Their agreement is not only qualitatively but also quantitatively fine.
At $K=0.64$,
 a synchronized motion was not observed in numerical simulations.

Thus, as predicted in Theorem~\ref{thm:4a}, we observe that
 the asymptotically stable family \eqref{eqn:csol0} of continuous stationary solutions
 to the CL \eqref{eqn:csys} with \eqref{eqn:lomega}
 behaves as if it is an asymptotically stable one
 in the KM \eqref{eqn:dsys} with random natural frequencies,
 under an appropriate random permutation.
So the method of CLs is still valid
 even when the natural frequencies are random.

\section*{Acknowledgments}
This work was partially supported by JSPS KAKENHI Grant Number JP23K22409.



\end{document}